\documentclass[fleqn]{article}
\usepackage{amsmath,amssymb,amsfonts,amsthm,pdfpages,hyperref,mathtools,esint}
\usepackage[english]{babel}
\usepackage{framed}
\usepackage{authblk}
\usepackage[T1]{fontenc}
\usepackage[utf8]{inputenc}

\DeclareMathAlphabet{\mathonebb}{U}{bbold}{m}{n}

\newcommand{\Z}{{\mathbb Z}}
\newcommand{\R}{{\mathbb R}}
\newcommand{\RR}{{\mathbb{R}}}
\newcommand{\CC}{{\mathbb{C}}}
\newcommand{\norm}[1]{\left\Vert{#1}\right\Vert}
\newcommand{\abs}[1]{\left\vert{#1}\right\vert}
\newcommand{\nnn}{\nonumber}

\title{Vortex structure in $p$-wave superconductors}
\author[1]{Stan Alama \thanks{{\ttfamily alama@mcmaster.ca}}}
\author[1]{Lia Bronsard \thanks{{\ttfamily bronsard@mcmaster.ca}}}
\author[1,2]{Xavier Lamy \thanks{{\ttfamily xlamy@math.univ-lyon1.fr}}}
\affil[1]{\footnotesize{Department of Mathematics and Statistics, McMaster University, Hamilton, ON, L8S 4K1, Canada.}}
\affil[2]{\footnotesize{Universit\'e de Lyon, Institut Camille Jordan,  CNRS UMR 5208, Universit\'e Lyon 1,  France.}}

\newtheorem{thm}{Theorem}[section]
\newtheorem{prop}[thm]{Proposition}
\newtheorem{lem}[thm]{Lemma}

\theoremstyle{definition}

\DeclareMathOperator{\curl}{curl}

\begin{document}
\date{\today}
\maketitle
\begin{abstract}
We study vortices in p-wave superconductors in a Ginzburg-Landau setting.  The state of the superconductor is described by a pair of complex wave functions, and the p-wave symmetric energy functional couples these in both the kinetic (gradient) and potential energy terms, giving rise to systems of partial differential equations which are nonlinear and coupled in their second derivative terms.  We prove the existence of energy minimizing solutions in bounded domains $\Omega\subset\RR^2$, and consider the existence and qualitative properties (such as the asymptotic behavior) of equivariant solutions defined in all of $\RR^2$.  
%For the latter questions, we embed the physical problem in a one-parameter family of variational problems and employ perturbative methods.  
The coupling of the equations at highest order changes the nature of the solutions, and many of the usual properties of classical Ginzburg-Landau vortices either do not hold for the p-wave solutions or are not immediately evident.
\end{abstract}

\section{Introduction}

With the discovery of high temperature superconductors physicists have investigated many new and unusual families of superconducting materials, many with properties which are quite different from the metal superconductors which were originally studied a century ago.  Among these is Sr${}_2\,$Ru\,O${}_4$, which (although it is not a high temperature superconductor) has a layered perovskite crystalline structure which is very similar to the cuprate high $T_C$ materials.  This material is special, however, in that it has a different electronic structure from conventional ``s-wave'' superconductors described by the microscopic BCS model, but instead exhibits a ``p-wave'' electron pairing symmetry (see \cite{agter98}).  Superconductors with p-wave pairing develop such unconventional properties as spontaneous magnetization and surface currents\cite{heebagterberg99, kallin}, and square vortex lattices in certain parameter regimes \cite{agter98}.

In this paper we consider a Ginzburg--Landau model for p-wave superconductors in two dimensions.  The state of the superconductor is described by a pair of complex wave functions, $\eta=(\eta_-,\eta_+): \ \Omega\subset\RR^2\to \CC^2$ and the magnetic vector potential, $A: \ \Omega\to\RR^2$.  The p-wave symmetry is encoded in the kinetic energy by means of an anisotropic gradient term,
\begin{equation*}
E(\eta,A) = \int \left( e_{kin}(\eta,A) + {\kappa^2} e_{pot}(\eta) + |\curl A|^2 \right),
\end{equation*}
where
\begin{align*}
e_{kin}(\eta,A) & = |D\eta_+ |^2 +|D\eta_- |^2  + (1+\nu)\left[ D_x\eta_+ \cdot D_x\eta_- -D_y\eta_+ \cdot D_y \eta_-\right] \\
&\quad + (1-\nu)\left[ D_x\eta_-\wedge D_y\eta_+ - D_x\eta_+\wedge D_y\eta_-\right]  
%\\
%& = |D\eta_+ |^2 +|D\eta_- |^2  + (\Pi_- \eta_+)\cdot (\Pi_+\eta_-) + \nu (\Pi_+\eta_+)\cdot (\Pi_-\eta_-)
\end{align*}
and
\begin{equation}\label{potential}
e_{pot}(\eta) = \frac 12 (|\eta_+ |^2-1)^2 + \frac 12 (|\eta_- |^2-1)^2 + 2 |\eta_+ |^2 |\eta_- |^2 + \nu (\eta_+^2)\cdot (\eta_-^2).
\end{equation}
Here $\kappa$ is the Ginzburg--Landau parameter, $\nu \in (-1,1)$ is an anisotropy parameter, 
and the operator $D=\nabla -i A$.  The dot and wedge product on $\CC$ are calculated by treating $z=x+iy\in\CC$ as a real vector $(x,y)\in\RR^2$, and applying the usual definitions.
%and the operators $D$, $\Pi_\pm$ are defined as follows:
%\begin{equation*}
%D=\nabla -i A,\quad \Pi_+ = \Pi = D_x + i D_y,\quad \Pi_- = -\Pi^* = D_x-i D_y.
%\end{equation*}

By writing the potential energy in the form,
\begin{align*}
e_{pot} & = \frac 12 + \frac 12 (|\eta_+|^2+ |\eta_-|^2 -1)^2 + (1-|\nu|)|\eta_+|^2|\eta_-|^2 \\
&\quad + |\nu | \left[|\eta_+|^2|\eta_-|^2 +\mathrm{sign}(\nu) (\eta_+^2) \cdot (\eta_-^2)\right],
\end{align*}
we note that for $-1 <\nu <1$, the minimum of the potential $e_{pot}$ is attained exactly at
\begin{equation*}
(\eta_-,\eta_+) = (1,0) \text{ or } (0,1).
\end{equation*}
Thus, we expect that energy minimizers will have this form away from any vortices, with one ''dominant" component, which we take to be $\eta_-$, $|\eta_-|\simeq 1$, and one ``admixed'' component \cite{heebagterberg99} $\eta_+$ which is small in the bulk of the sample.

Also note that  $E$ is gauge invariant: for smooth enough $\varphi$, 
\begin{equation*}
E(\eta_\pm, A) = E(e^{i\varphi}\eta_\pm, A+\nabla\varphi).
\end{equation*}

The goal of this paper is to study isolated vortices in this p-wave Ginzburg--Landau model, and thus we concentrate on energy minimizing solutions with given degrees imposed on the boundary of a disk or at infinity, in the case of entire solutions (defined on $\Omega=\RR^2$.)  As in the classical Ginzburg--Landau functional, in questions concerning isolated vortices the role of the magnetic field $h=\curl A$ is secondary, and so we neglect the vector potential $A$ in this paper.  We expect that our results should extend to the full system with vector potential with some minor technical adjustments.  With this simplification, the energy functional takes the form:
$$  E(\eta)  = \int_\Omega \left[ e_{kin}(\eta) + \kappa^2 e_{pot}(\eta)\right] dx, $$
with $e_{pot}$ as before, and 
\begin{align}
e_{kin}(\eta) & = |\nabla\eta_+ |^2 +|\nabla\eta_- |^2  + (1+\nu)\left[ \partial_x\eta_+ \cdot \partial_x\eta_- -\partial_y\eta_+ \cdot \partial_y \eta_-\right] \nnn\\
\nnn
&\qquad + (1-\nu)\left[ \partial_x\eta_-\wedge \partial_y\eta_+ - \partial_x\eta_+\wedge \partial_y\eta_-\right]  
\\  \label{kinetic}
& = |\nabla\eta_+ |^2 +|\nabla\eta_- |^2  + (\Pi_- \eta_+)\cdot (\Pi_+\eta_-) + \nu (\Pi_+\eta_+)\cdot (\Pi_-\eta_-),
\end{align}
with operators $\Pi_+ = \Pi = \partial_x + i \partial_y,$ $\Pi_- = -\Pi^* = \partial_x-i \partial_y$.  
As we will see shortly, the kinetic energy is nonnegative, but not coercive: it vanishes along a nontrivial linear subspace of functions $\eta$.  This is an early indication of the difficulties involved in the analysis of the p-wave functional.  Energy minimizers solve a system of Euler--Lagrange equations, which are coupled in the second derivative terms:
\begin{equation}\label{GLsyst}
\left.
\begin{aligned}
2\Delta \eta_- + [\Pi_-^2 + \nu\Pi_+^2]\eta_+ & = \kappa^2\left( 2\eta_-(|\eta_-|^2 -1)
     + 4\eta_-|\eta_+|^2 + 2\nu\overline{\eta}_-\eta_+^2\right) \\
2\Delta \eta_+ + [\Pi_+^2 + \nu\Pi_-^2]\eta_- & = \kappa^2\left( 2\eta_+(|\eta_+|^2 -1)
     + 4\eta_+|\eta_-|^2 + 2\nu\overline{\eta}_+ \eta_-^2\right) \\
\end{aligned}
\right\}
\end{equation}

Our first result concerns the existence of energy minimizing solutions in any smooth bounded simply connected domain $\Omega\subset\RR^2$.  Consider the Dirichlet boundary condition 
\begin{equation}\label{DirBC}
\eta_\pm|_{\partial\Omega} =  g_\pm,
\end{equation}
where  $g_\pm: \ \partial\Omega\to  \CC$ are given smooth functions.

\begin{thm}\label{thm1}  Let $g_\pm\in H^{1/2}(\partial\Omega)$ and define
\begin{equation*}
W=\{\eta\in H^1(\Omega;\CC^2): \ \text{\eqref{DirBC} is satisfied}\}.
\end{equation*}
Assume that $(c_++\alpha z, c_- -\alpha \bar{z})\not\in W$ for any constants $\alpha, c_\pm\in\CC$.
Then, there exist a minimizer of $E(\eta)$ in $W$.

In particular, there exists a minimizer in $\Omega=B_R$ for $g_\pm= \alpha_\pm e^{in_\pm\theta}$ provided that one of $n_\pm\neq \pm 1$ or $\alpha_+\neq -\alpha_-$.
\end{thm}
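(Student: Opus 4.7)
The plan is to apply the direct method of the calculus of variations. The main obstacle, flagged in the preamble, is that $\int_\Omega e_{kin}$ is only semidefinite; I first identify its null space and then prove Korn-type coercivity on $H^1_0$.

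Completing the square using $|\nabla\eta_\pm|^2 = \tfrac12(|\Pi_+\eta_\pm|^2+|\Pi_-\eta_\pm|^2)$ gives
\begin{equation*}
e_{kin}(\eta) = \tfrac12|\Pi_-\eta_+ + \Pi_+\eta_-|^2 + \tfrac12\bigl(|\Pi_+\eta_+|^2+|\Pi_-\eta_-|^2+2\nu(\Pi_+\eta_+)\cdot(\Pi_-\eta_-)\bigr),
\end{equation*}
which for $|\nu|<1$ is nonnegative and vanishes exactly when $\partial_{\bar z}\eta_+ = \partial_z\eta_- = 0$ and $\partial_z\eta_+ + \partial_{\bar z}\eta_- = 0$. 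Solving these Cauchy--Riemann-type equations ($\eta_+$ holomorphic, $\eta_-$ antiholomorphic, with constant opposite derivatives) identifies the null space as $N = \{(c_+ + \alpha z, c_- - \alpha\bar z) : c_\pm, \alpha\in\CC\}$; the hypothesis is exactly $N\cap W = \emptyset$. Moreover $N\cap H^1_0(\Omega;\CC^2) = \{0\}$ always, since the real and imaginary parts of a nonzero complex affine polynomial have straight lines as zero sets, not Jordan curves.

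Coercivity on $H^1_0$ follows from ellipticity of the overdetermined first-order system $L\eta := (\Pi_+\eta_+,\, \Pi_-\eta_-,\, \Pi_-\eta_+ + \Pi_+\eta_-)$: its Fourier symbol $\left(\begin{smallmatrix}i\zeta & 0 \\ 0 & i\bar\zeta \\ i\bar\zeta & i\zeta\end{smallmatrix}\right)$ (with $\zeta=\xi_1+i\xi_2$) satisfies $|\widehat{L\eta}|^2 \geq |\zeta|^2(|\hat\eta_+|^2+|\hat\eta_-|^2)$. For $\phi\in H^1_0$, extending by zero to $\RR^2$ and using Parseval yields $\int|L\phi|^2 \geq \|\nabla\phi\|_{L^2}^2$; combined with $e_{kin}(\eta) \geq \tfrac{1-|\nu|}{2}|L\eta|^2$ from the decomposition above and Poincar\'e's inequality,
\begin{equation*}
\|\phi\|_{H^1}^2 \leq C \int_\Omega e_{kin}(\phi), \quad \phi\in H^1_0(\Omega;\CC^2).
\end{equation*}

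For a minimizing sequence $\eta^n\in W$ (nonempty since $g_\pm\in H^{1/2}$ admits an $H^1$ extension $\eta_0$, for which $E(\eta_0)<\infty$ by the 2D embedding $H^1\hookrightarrow L^4$), set $\phi^n = \eta^n - \eta_0\in H^1_0$. Since $e_{kin}$ is positive semidefinite, $e_{kin}(\phi^n) \leq 2e_{kin}(\eta^n) + 2e_{kin}(\eta_0)$, so $\int e_{kin}(\phi^n)$ is bounded, and by the coercivity above $\{\phi^n\}$ and hence $\{\eta^n\}$ is bounded in $H^1$. Extract $\eta^n\rightharpoonup\eta$ in $H^1$ with $\eta^n\to\eta$ in $L^4$ by Rellich: weak continuity of the trace gives $\eta\in W$, while weak lower semicontinuity of the convex quadratic $\int e_{kin}$ together with $L^4$-continuity of the polynomial $\int e_{pot}$ give $E(\eta)\leq\liminf E(\eta^n)=\inf_W E$. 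For $g_\pm = \alpha_\pm e^{in_\pm\theta}$ on $\partial B_R$, $N\cap W\neq\emptyset$ requires $c_+ + \alpha Re^{i\theta} = \alpha_+ e^{in_+\theta}$ and $c_- - \alpha Re^{-i\theta} = \alpha_- e^{in_-\theta}$; matching Fourier modes in $\theta$ shows this is solvable only when $n_+=1$, $n_-=-1$, and $\alpha_+=-\alpha_-$. The main obstacle is the Korn-type coercivity: $e_{kin}$ directly controls only three of the four complex first derivatives of $\eta$, and the null-space identification together with ellipticity of $L$ is what restores full $H^1$-control on $H^1_0$.
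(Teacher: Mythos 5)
Your proposal is correct, but the crucial coercivity step takes a genuinely different route from the paper's. The paper first characterizes the zero set of $e_{kin}$ (via a different sum-of-squares decomposition, valid for all $\nu\in(-1,1)$) and then proves coercivity \emph{on the affine class $W$ itself} by a soft compactness argument: a normalized sequence in $W$ with vanishing kinetic energy converges weakly to a null-space element of $W$, contradicting the hypothesis $W\cap N=\emptyset$; the resulting constant is non-explicit and depends on $W$. You instead prove a quantitative inequality on $H^1_0$ only, from the pointwise bound $e_{kin}\ge\frac{1-|\nu|}{2}\bigl(|\Pi_+\eta_+|^2+|\Pi_-\eta_-|^2+|\Pi_-\eta_++\Pi_+\eta_-|^2\bigr)$ together with the exact identity $\int|\Pi_\pm u|^2=\int|\nabla u|^2$ for zero-trace functions (Plancherel, or equivalently the null-Lagrangian structure of the cross term), and you then transfer boundedness of the minimizing sequence through the splitting $\eta^n=\eta_0+\phi^n$ and the parallelogram inequality for the semidefinite form. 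This buys an explicit constant, avoids compactness, and in fact your argument never invokes the hypothesis $W\cap N=\emptyset$: it gives a minimizer for arbitrary $g_\pm\in H^{1/2}(\partial\Omega)$, a slightly stronger conclusion than Theorem~\ref{thm1} (the hypothesis is what the paper's method needs for coercivity on $W$, a lemma it reuses later for the equivariant classes in Section~\ref{s_entire}). Two minor points: your identification of the null space uses connectedness of $\Omega$ (as does the paper's), and your Fourier-matching on $\partial B_R$ ignores the degenerate $\alpha=0$ possibilities (constant boundary data, e.g.\ $n_\pm=0$ with $\alpha_\pm\neq0$), where $W\cap N\neq\emptyset$ even though $n_\pm\neq\pm1$; the paper's one-line verification is no more careful, and in your framework the ``in particular'' clause is subsumed by the unconditional existence anyway.
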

We recall that the potential energy is minimized with $|\eta_-|=1$, $|\eta_+|=0$ (or vice-versa,) and hence a natural choice of boundary condition is
\begin{equation}\label{symbc}
\eta_- |_{\partial \Omega}= e^{i n\theta}, \qquad \eta_+ |_{\partial\Omega} =0,
\end{equation}
with $n\in\mathbb{N}$, in analogy with Ginzburg--Landau vortices but recognizing the bulk states preferred by $e_{pot}$. 
Theorem~\ref{thm1} is proved in section \ref{s_exist}.  There we show that the restriction on the boundary data can compensate for the general lack of coercivity in the whole space $H^1(\Omega)$.
 
As in the classical Ginzburg--Landau model, it is to be expected that the symmetric (equivariant) vortex solutions, $\eta_\pm=f(r)e^{i n_\pm\theta}$, play a special role.  Here we already see the effect of the p-wave symmetry, as radial solutions do not exist in general, but only for certain choices of the parameters.  Indeed, in section \ref{s_entire} we show that equivariant solutions cannot exist for anisotropy $\nu\neq 0$, and that for $\nu=0$ there is a restriction on the degrees, $n_+=n_-+2$. 

Assuming $\nu=0$ and $n_+=n_- +2$, the equivariant ansatz reduces the problem to finding real-valued functions $(f_-(r),f_+(r))$, $r\in (0,\infty)$, which solve the Euler-Lagrange equations, a system of two coupled second-order ordinary differential equations (see \eqref{systf1} below.)  As with the classical Ginzburg--Landau model, entire solutions (in all $\RR^2$) with nontrivial degree at infinity must have infinite energy.  We thus adopt the strategy of passing to the limit in balls $B_R$ of increasing radius, in which we minimize the energy subject to the boundary condition \eqref{symbc} on $\partial\Omega=\partial B_R$.  Even in this simpler context, there are significant obstacles to overcome.  Although the existence of solutions in the balls $B_R$ is guaranteed by Theorem~\ref{thm1}, for general $n\in\mathbb{N}$ the coupling of the system at highest order prevents us from obtaining the necessary {\em a priori} estimates to pass to the limit $R\to\infty$, except when $n=-1$.  For $n=-1$, which is the most physically relevant case \cite{heebagterberg99}, we prove:

\begin{thm}\label{thm2}
There exists a smooth entire equivariant solution 
$\eta=(\eta_-,\eta_+)=(f_-(r)e^{-i\theta},f_+(r)e^{+i\theta})$ to the Ginzburg--Landau system \eqref{GLsyst}, with $f_-(r)\to 1$ and $f_+(r)\to 0$ as $r\to\infty$. Moreover it holds
\begin{equation}\label{asymptot1}
f_-= 1 - \frac{1}{2 r^{2}} -\frac{7}{4 r^{4}} + O(r^{-6}),\qquad
f_+ = -{1\over 2 r^2} -{13\over 4r^4} + O(r^{-6}),
\end{equation}
as $r\to +\infty$.
\end{thm}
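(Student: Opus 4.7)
The plan is to follow the ball-approximation strategy sketched in the introduction: construct $\eta$ as a limit of equivariant minimizers on expanding disks $B_R$, and then justify the asymptotic expansion at infinity by matching a formal power series. Under the equivariant ansatz with $\nu = 0$ and $n_\pm = \pm 1$, the system \eqref{GLsyst} reduces to a coupled pair of scalar ODEs
\begin{align*}
2\mathcal{L} f_- + \mathcal{L} f_+ &= \kappa^2\left[2 f_-(f_-^2 - 1) + 4 f_- f_+^2\right], \\
2\mathcal{L} f_+ + \mathcal{L} f_- &= \kappa^2\left[2 f_+(f_+^2 - 1) + 4 f_+ f_-^2\right],
\end{align*}
where $\mathcal{L} f = f'' + f'/r - f/r^2$ is the radial Laplacian for degree $\pm 1$. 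The key structural feature, special to $n = -1$, is that both equations carry the \emph{same} operator $\mathcal{L}$; the principal-part matrix $\bigl(\begin{smallmatrix}2 & 1 \\ 1 & 2\end{smallmatrix}\bigr)$ is positive definite with eigenvalues $1, 3$, so the change of variables $g_\pm = f_- \pm f_+$ decouples the leading order into two scalar Bessel-type ODEs.

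For each $R > 0$, direct minimization of the reduced energy within the equivariant class with boundary conditions $f_-(R) = 1$, $f_+(R) = 0$ produces a solution $(f_-^R, f_+^R)$; regularity at the origin inherent in the equivariant ansatz forces $f_\pm^R(0) = 0$. To obtain uniform bounds as $R \to \infty$, I would work with the decoupled variables $g_\pm^R$: the leading-order operators being scalar, a truncation/comparison argument against the minimizers $(1, 0)$ of $e_{pot}$ yields $|f_\pm^R| \le C$ uniformly in $R$. Local elliptic regularity then gives uniform $C^{k,\alpha}_{loc}$ bounds on $(0, \infty)$, and a diagonal extraction along $R_k \to \infty$ yields a smooth limit $(f_-, f_+)$ solving the ODE system on all of $(0, \infty)$, corresponding to a smooth entire equivariant solution of \eqref{GLsyst}. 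For the behavior at infinity, I combine finiteness of $\int_1^\infty r\,e_{pot}(\eta)\,dr$ (inherited from the uniform energy estimates on annuli $B_{2\rho}\setminus B_\rho$) with continuity and the positivity of $f_-$ to conclude $f_- \to 1$, $f_+ \to 0$ as $r \to \infty$.

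For the asymptotics, substitute formal series $f_- = 1 + \sum_{k \ge 1} a_{2k} r^{-2k}$ and $f_+ = \sum_{k \ge 1} b_{2k} r^{-2k}$ into the ODE system and match coefficients. The inhomogeneity arises from $\mathcal{L}(1) = -1/r^2$, and the Hessian of $e_{pot}$ at $(1, 0)$ equals $\mathrm{diag}(4, 2)$, so the leading linearization of the right-hand side is $\kappa^2\,\mathrm{diag}(4, 2)$. Matching at order $r^{-2}$ gives $a_2 = b_2 = -1/(2\kappa^2)$, and at order $r^{-4}$ gives $a_4 = -7/(4\kappa^2)$, $b_4 = -13/(4\kappa^2)$; with the normalization $\kappa = 1$ implicit in \eqref{asymptot1} these produce the stated values. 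To upgrade the formal expansion to a rigorous bound, set $U_-(r) = f_-(r) - 1 - a_2 r^{-2} - a_4 r^{-4}$ and $U_+(r) = f_+(r) - b_2 r^{-2} - b_4 r^{-4}$; substituting yields a linear system for $(U_-, U_+)$ with source of size $O(r^{-6})$ and a linearization bounded below by a positive matrix at infinity, so a standard ODE comparison/variation-of-constants argument gives $U_\pm = O(r^{-6})$.

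The principal obstacle is the uniform estimate on $(f_-^R, f_+^R)$ as $R \to \infty$. The p-wave coupling at highest order prevents direct application of the scalar maximum principle to each component. It is only for $n = -1$ that both components carry the same radial operator $\mathcal{L}$, making the principal-part matrix $\bigl(\begin{smallmatrix}2 & 1 \\ 1 & 2\end{smallmatrix}\bigr)$ diagonalizable into scalar equations via $g_\pm$, to which classical barrier methods apply. For general $n$ the two components carry different Bessel operators, the system cannot be so decoupled, and the limit $R \to \infty$ is out of reach, as noted in the introduction.
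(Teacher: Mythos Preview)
Your overall architecture---minimize on $B_R$, extract a limit via uniform bounds, identify the limit at infinity through a potential-energy estimate, then justify the formal expansion by comparison---matches the paper. Your diagonalization $g_\pm=f_-\pm f_+$ is a legitimate alternative route to the $L^\infty$ bound: in those variables the system becomes
\[
3\mathcal L g_+ = \tfrac12 g_+\bigl(3g_+^2+g_-^2-4\bigr),\qquad
\mathcal L g_- = \tfrac12 g_-\bigl(g_+^2+3g_-^2-4\bigr),
\]
so the principal parts are scalar and only the zero-order terms couple; a maximum-principle argument on $\tfrac34 g_+^2+\tfrac14 g_-^2$ then gives the bound. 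The paper instead tests the weak form of the original system against $\varphi_\pm=f_\pm V$ with $V=(f_-^2+f_+^2+f_-f_+-\tfrac32)_+$ to reach the same conclusion; the two are closely related since $f_-^2+f_+^2+f_-f_+=\tfrac34 g_+^2+\tfrac14 g_-^2$.

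There are, however, two genuine gaps. First, your claim that $\int_1^\infty r\,e_{pot}\,dr<\infty$ is ``inherited from uniform energy estimates on annuli'' does not work: the total energy of the minimizers grows like $\log R$, and bounding the energy on each dyadic annulus uniformly does not separate the potential contribution from the (divergent) kinetic one. The paper obtains the uniform potential bound $\int_0^R e_{pot}\,r\,dr\le \tfrac12$ via a Pohozaev identity (multiply each equation by $r^2 f_\pm'$ and integrate), and this is the step that actually pins down the behaviour at infinity. Without it you cannot conclude that $e_{pot}\to 0$ along the limit.

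Second, you invoke ``the positivity of $f_-$'' to select the limit $(1,0)$ over $(0,1)$. But positivity of $f_-$ is precisely what is \emph{not} established for the $t=1$ system---the paper devotes an entire section to proving it only for small $t$ by perturbation, and leaves the general case open. The correct argument here is simpler: once you know the limit is one of $(1,0)$ or $(0,1)$, use that the system is symmetric under $f_-\leftrightarrow f_+$ and relabel if necessary.
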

The existence of entire equivariant solutions with degrees $(n, n+2)$, $n\neq -1$, is an open problem, as is uniqueness.

Given the usual interpretation of $f_\pm(r)$ as a local density of superconducting electrons, we would expect that these solutions have fixed sign.  This is a nontrivial question, as the coupling of the two components in the kinetic energy term precludes the usual arguments used in Ginzburg--Landau vortices, and even the methods developed for semilinear Ginzburg--Landau systems \cite{ABM1} fail in this context.  To obtain a result in this direction we introduce an additional parameter into the model, and employ perturbative methods.  For $t\in [0,1]$, we consider the family of functionals,
\begin{equation}\label{E_t}
E_t (\eta;R)= \int_{B_R} (|\nabla \eta_+|^2 +|\nabla \eta_-|^2 + t(\Pi_+ \eta_-) \cdot (\Pi_- \eta_+) + e_{pot}).
\end{equation}
When $t=0$ the system couples only through the potential energy term.  Vortices in a two-component model with similar potential energy were studied by Lin \& Lin \cite{lin2003vortices}, and with an applied magnetic field by Alama \& Bronsard \cite{ABp1,ABp2}. 
With the equivariant ansatz $\eta=(\eta_-,\eta_+)=(f_-(r)e^{-i\theta},f_+(r)e^{+i\theta})$, the Euler--Lagrange equations take the form
\begin{equation}\label{GLt}
\begin{gathered}
\Delta_r f_- -\frac{1}{r^2}f_- + \frac t2 (\Delta_r f_+ -\frac{1}{r^2}f_+ ) = f_- (f_-^2-1)+2f_-f_+^2,\\
\Delta_r f_+ -\frac{1}{r^2}f_+ + \frac t2 (\Delta_r f_- -\frac{1}{r^2}f_- ) = f_+ (f_+^2-1)+2f_+f_-^2.
\end{gathered}
\end{equation}
When $t=1$, this is exactly the system satisfied by the physical p-wave functions with the equivariant ansatz and $n=-1$.  On the other hand, when $t=0$ the system \eqref{GLt} partially decouples, and admits a solution of the form $f^0=(f^0_-, f^0_+)=(f,0)$, with $f(r)$ the radial degree-one Ginzburg--Landau vortex profile.  We verify that $f^0$ gives a nondegenerate locally minimizing solution to the system \eqref{GLt} at $t=0$, and the solutions for $t>0$ are obtained via the Implicit Function Theorem.
In section \ref{s_perturb} we prove:
\begin{thm}\label{thm3}
There exists $t_0$ such that for all $t\in (0,t_0)$ there exist smooth bounded solutions $(f^t_-, f^t_+)$ of \eqref{GLt} such that:
\begin{enumerate}
\item[(a)] $f^t_-(0)=0=f^t_+(0)$;
\item[(b)] $f^t_-(r)\to 1$, $f^t_+(r)\to 0$ as $r\to\infty$;
\item[(c)] $0<f_-^t(r)<1$, $f^t_+(r)<0$ for all $r\in (0,\infty)$;
\item[(d)] As $r\to\infty$,
\begin{equation*}
f_-^t= 1 - \frac{1}{2 r^{2}} -\frac{5t^2+9}{8 r^{4}} + O(r^{-6}), \qquad
f_+^t = t\left[-{1\over 2 r^2} -{13\over 4r^4} + O(r^{-6})\right].
\end{equation*}
\end{enumerate}
\end{thm}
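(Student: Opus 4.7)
The plan is to invoke the implicit function theorem, viewing the system \eqref{GLt} as a map $F(t,f_-,f_+)=0$ acting on a Banach space of radial functions on $(0,\infty)$ that incorporates the vanishing $f_\pm(0)=0$ from the equivariant ansatz together with the asymptotic conditions $f_-\to 1$, $f_+\to 0$ (for instance, a weighted $C^{2,\alpha}$ space, or $(1-f_-, f_+)$ in a suitable weighted $H^2_{\mathrm{rad}}$). At $t=0$ the system decouples and the pair $(f,0)$, with $f$ the classical radial degree-one Ginzburg--Landau vortex profile, is an obvious solution.

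Linearizing at $(t,f_-,f_+)=(0,f,0)$ in the direction $(\psi,\phi)$, the coupling vanishes, and the Frechet derivative splits into two independent scalar operators,
\begin{equation*}
M_-\psi = -\Delta_r\psi + \tfrac{1}{r^2}\psi + (3f^2-1)\psi,\qquad L_+\phi = -\Delta_r\phi + \tfrac{1}{r^2}\phi + (2f^2-1)\phi.
\end{equation*}
Invertibility of $M_-$ on the radial sector is a classical nondegeneracy property of the degree-one vortex. For $L_+$, the substitution $\phi=f\psi$ (valid since $f>0$ on $(0,\infty)$) combined with integration by parts gives
\begin{equation*}
\int_0^\infty (L_+\phi)\phi\, r\, dr = \int_0^\infty \bigl[f^2(\psi')^2 + f^4\psi^2\bigr] r\, dr,
\end{equation*}
so $L_+$ is strictly positive, and since $L_+ f = f^3>0$, a Perron-type argument shows that its Green's function is positive, yielding invertibility together with a maximum principle for $L_+^{-1}$. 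The IFT then produces a smooth branch $t \mapsto (f_-^t, f_+^t)$ of solutions for $t$ in a neighbourhood of $0$ with $(f_-^0,f_+^0)=(f,0)$, and elliptic regularity upgrades these to classical smooth radial solutions.

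Differentiating \eqref{GLt} in $t$ at $t=0$, the first-order corrections $(\psi_1,\phi_1) = \partial_t(f_-^t,f_+^t)|_{t=0}$ satisfy $M_-\psi_1 = 0$ and $L_+\phi_1 = \tfrac{1}{2}f(f^2-1)$. Invertibility of $M_-$ forces $\psi_1\equiv 0$, so $f_-^t = f + O(t^2)$; while the right-hand side for $\phi_1$ is strictly negative (since $0<f<1$), and positivity of $L_+^{-1}$ forces $\phi_1<0$ on $(0,\infty)$. Hence $f_+^t = t\phi_1 + O(t^2) < 0$ for small $t>0$, giving the sign statement for $f_+^t$ in (c); the bounds $0<f_-^t<1$ follow from the strict inequalities for $f$ together with $C^2$-closeness and a maximum-principle argument applied directly to the ODE for $f_-^t$. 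Properties (a) and (b) are encoded in the function-space setup. Finally, for (d) one inserts a formal expansion $f_-^t = 1 + a_2/r^2 + a_4/r^4 + \cdots$, $f_+^t = b_2/r^2 + b_4/r^4+\cdots$ into \eqref{GLt} and matches powers of $1/r$, using the linearized equation at infinity to determine the leading coefficients and the nonlinear correction to pin down the next order; rigorous justification exploits the fact that $M_-$ is asymptotic at infinity to $-\Delta_r + 2$, which has good invertibility on decaying radial functions.

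The principal obstacle is the setup of weighted spaces on the unbounded half-line in which both $M_-$ and $L_+$ are genuine Fredholm isomorphisms, not merely injective or formally positive; once that is achieved, the remaining subtlety is extending the sign inequalities in (c) uniformly up to the boundary points $r=0$ and $r=\infty$, where the $O(t^2)$ estimate from the IFT degenerates and has to be supplemented by the explicit asymptotic control from (d).
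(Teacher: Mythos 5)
Your overall strategy is the same as the paper's: perturb off the decoupled $t=0$ solution $(f,0)$ by the implicit function theorem, show the $t$-derivative $h=\partial_t f_+^t|_{t=0}$ solves $L_+h=\tfrac12 f(f^2-1)$ and is negative, and use the asymptotic expansion to control signs at infinity. Some of your ingredients are genuinely different and attractive: the substitution $\phi=f\psi$ giving $\int (L_+\phi)\phi\,r\,dr=\int\bigl[f^2(\psi')^2+f^4\psi^2\bigr]r\,dr$ is a more elementary route to non-negativity than the paper's appeal to Mironescu's stability result (interestingly, the paper uses the same $h=fg$ substitution, but only later, to prove $h<0$ via a maximum principle). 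Note, however, that strict positivity of the form is not yet invertibility: the paper's Lemma~\ref{lem_stab} needs a concentration argument to upgrade $Q_0\ge 0$ to $Q_0\ge\delta\|\cdot\|^2_{\mathcal H}$ before Lax--Milgram applies; in your setting you would instead argue via the essential spectrum of $L_+$ (which sits at $+1$) plus strict positivity, or reproduce such a compactness step. You flag this as "the principal obstacle" but do not close it.

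The genuine gap is in how (c) is deduced from (d). Since $h(0)=0$ and $h(r)\to 0$ as $r\to\infty$, the expansion $f_+^t=th+O(t^2)$ (in sup norm) cannot give $f_+^t<0$ near $r=0$ or for large $r$: at both ends $t|h|$ is eventually smaller than the $O(t^2)$ error for fixed small $t$. The paper handles $r$ near $0$ by proving $h'(0)<0$ and, crucially, smoothness of $t\mapsto f^t$ in $C^k([0,R])$ (a separate IFT argument, Step~2 of the paper), so that $(f_+^t)'<0$ near the origin; and it handles large $r$ by proving the expansion (d) \emph{uniformly in $t$} and with the remainder for $f_+^t$ of the form $t\,O(r^{-6})$ --- i.e.\ the error carries a factor of $t$ (Theorem~\ref{asymptotics}, whose sub/supersolutions for the $+$ component are built with an overall factor $t$, and whose hypothesis $|f_+^t|\le t\delta$ is itself verified from the Taylor expansion in $t$). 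Your sketch of (d) --- formal matching of powers of $1/r$ plus invertibility of the linearization at infinity --- produces the coefficients but neither the uniformity in $t$ nor the $t$-proportional remainder; with a remainder merely $O(r^{-6})$, the inequality $f_+^t\le -t/(2r^2)+O(r^{-6})$ does not force a sign for small $t$. Likewise your claim that $0<f_-^t<1$ follows from "$C^2$-closeness and a maximum principle applied directly to the ODE for $f_-^t$" is not available as stated, since the equation for $f_-^t$ is coupled to $f_+^t$; the paper instead uses $f'\ge c>0$ on compacts (hence $(f_-^t)'>0$ there) together with the uniform asymptotics on $[R,\infty)$. So the architecture is right, but without the $t$-uniform asymptotics with $t$-factored error, and without $C^1$-in-$t$ control up to $r=0$, part (c) is not proved.
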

Note that $0>f_+(r)=-|\eta_+|$, and so the components of the equivariant solution incorporate a relative phase shift of $\pi$, in addition to having conjugate phases.
The asymptotic estimate in (d) may be made uniform for $r\ge R$ and $t\in (0,t_0)$; see Theorem~\ref{asymptotics} for a more precise statement.
We note that it is thanks to the uniform bounds on the asymptotic error that we may obtain the global control of the signs of the components in (c).  Our result does not preclude the possiblity that one or both of $f^t_\pm$ vanishes or changes sign at some value of $t\in (0,1]$.  If this were to occur at some $t$, the solution $\eta_\pm=f^t_\pm e^{\pm i \theta}$ would still be a valid solution to the system of equations, but with a very unconventional profile for vortices.  We conjecture that in fact (c) remains valid for all $t\in (0,1]$, but again this question is open.

The methods employed in this paper extend various techniques used to study vortices in Ginzburg--Landau systems.  In particular, the perturbation arguments rely on the extensive analysis of the linearization of the classical Ginzburg--Landau functional by Mironescu \cite{mironescu95}.  The asymptotic expansion follows the basic strategy followed in \cite{alamagao13}, based on \cite{CEQ}.  The use of perturbative methods to study entire vortex solutions to the d-wave symmetric coupled Ginzburg--Landau system were also introduced by Kim \& Phillips \cite{kimphillips} and Han \& Lin \cite{hanlin}, although their approach was different from ours.

\medskip

\noindent
{\bf Acknowledgements:} \ The authors are grateful to C. Kallin and E. Taylor of the McMaster Physics department for helpful discussions concerning the p-wave model.  SA and LB are supported by an NSERC (Canada) Discovery Grant. XL is supported by a Labex Milyon doctoral mobility scholarship for his stay at McMaster University. He wishes to thank McMaster's Department of Mathematics and Statistics for their hospitality, and his Ph.D. advisor P.~Mironescu for his constant support.

\section{Existence of minimizers}\label{s_exist}

We begin with the existence of minimizers for the general functional 
\begin{equation*}
E(\eta)=\int_{\Omega} \left( e_{kin}(\eta) + \kappa^2 e_{pot}(\eta) \right)\, dx
\end{equation*}
with $e_{kin}$ as in \eqref{kinetic}, $e_{pot}$ as in \eqref{potential}, and with Dirichlet boundary condition \eqref{DirBC}.
The existence of minimizers, even in a bounded domain $\Omega\subset\mathbb R^2$, is not obvious, since the kinetic energy is not coercive:

\begin{prop}\label{zerokin}
For any given $\eta_{\pm}\in H^1(\Omega)$, it holds that $e_{kin}(\eta)\geq 0$, with equality if and only if
\begin{equation*}
\eta_+ = c_+ +\alpha z,\quad \eta_- = c_- -\alpha \overline z,
\end{equation*}
for some $c_+,c_-,\alpha\in\mathbb C$.
\end{prop}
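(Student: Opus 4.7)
The plan is to decompose $e_{kin}$ as a sum of squared first-order differential expressions, all pointwise nonnegative, and then read off the equality case from the conditions that make each square vanish. The natural language is Wirtinger, since formula \eqref{kinetic} is already written in terms of the operators $\Pi_\pm$. My starting observation is the pointwise identity
\[
|\Pi_+ \eta|^2 + |\Pi_- \eta|^2 \;=\; 2|\nabla\eta|^2,
\]
valid for every $\eta\in H^1(\Omega;\CC)$, and obtained by expanding $\Pi_\pm \eta = \partial_x \eta \pm i \partial_y \eta$ and noting that the two Jacobian cross-terms have opposite signs.

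Substituting this into \eqref{kinetic} and regrouping the mixed terms as a perfect square produces
\[
2\,e_{kin}(\eta) \;=\; \bigl|\Pi_-\eta_+ + \Pi_+\eta_-\bigr|^2 \;+\; \Bigl(|\Pi_+\eta_+|^2 + 2\nu\,(\Pi_+\eta_+)\cdot(\Pi_-\eta_-) + |\Pi_-\eta_-|^2\Bigr).
\]
The first term is already a square. The second is a quadratic form in the $\RR^2$-pair $(\Pi_+\eta_+,\Pi_-\eta_-)$ with Gram matrix $\bigl(\begin{smallmatrix} 1 & \nu\\ \nu & 1\end{smallmatrix}\bigr)$, positive definite under the standing assumption $|\nu|<1$ (e.g.\ rewritten as $|\Pi_+\eta_+ + \nu\Pi_-\eta_-|^2 + (1-\nu^2)|\Pi_-\eta_-|^2$). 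This yields the desired inequality $e_{kin}(\eta)\ge 0$ pointwise a.e., and shows that equality a.e.\ forces the three first-order distributional identities
\[
\Pi_-\eta_+ + \Pi_+\eta_- = 0, \qquad \Pi_+\eta_+ = 0, \qquad \Pi_-\eta_- = 0,
\]
which make sense since $\eta_\pm \in H^1(\Omega)$.

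To finish, I would invoke Weyl's lemma (elliptic regularity for $\bar\partial$ and $\partial$) to upgrade the last two identities to genuine holomorphy: $\eta_+ = F(z)$ with $F$ holomorphic and $\eta_- = G(\bar z)$ with $G$ antiholomorphic on $\Omega$. Using $\Pi_- F = 2 F'(z)$ and $\Pi_+ G = 2 G'(\bar z)$, the remaining identity becomes $F'(z) + G'(\bar z) \equiv 0$ on $\Omega$. Since one side depends only on $z$ and the other only on $\bar z$, both must reduce to a single constant $\alpha\in\CC$ (on each connected component), giving $F' \equiv \alpha$, $G' \equiv -\alpha$, and therefore $\eta_+ = c_+ + \alpha z$, $\eta_- = c_- - \alpha\bar z$. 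The converse is an immediate computation. The only delicate step is the appeal to Weyl's lemma to pass from weak Cauchy--Riemann equations to smooth (anti)holomorphic functions; everything else is linear algebra on quadratic forms together with the standard structure of holomorphic functions.
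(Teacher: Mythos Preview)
Your proof is correct, and it takes a genuinely different route from the paper's. The paper rewrites $e_{kin}$ as a sum of four real squares,
\[
e_{kin}=\tfrac{1+\nu}{2}\bigl|\partial_x\eta_+ + \partial_x\eta_-\bigr|^2 + \tfrac{1+\nu}{2}\bigl|\partial_y\eta_+ - \partial_y\eta_-\bigr|^2
+ \tfrac{1-\nu}{2}\bigl|\partial_y\eta_+ + i\partial_x\eta_-\bigr|^2 + \tfrac{1-\nu}{2}\bigl|\partial_x\eta_+ + i\partial_y\eta_-\bigr|^2,
\]
and in the equality case reads off $\partial_x(\eta_++\eta_-)=0$, $\partial_y(\eta_+-\eta_-)=0$, $\Pi_+\eta_+=0$; it then separates variables in distributions ($\eta_+=u(y)+v(x)$, $\eta_-=u(y)-v(x)$) and differentiates the compatibility relation $iu'(y)+v'(x)=0$ to force $u,v$ affine. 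Your decomposition instead groups everything in Wirtinger form, $2e_{kin}=|\Pi_-\eta_++\Pi_+\eta_-|^2 + \bigl(|\Pi_+\eta_+|^2+2\nu(\Pi_+\eta_+)\cdot(\Pi_-\eta_-)+|\Pi_-\eta_-|^2\bigr)$, and in the equality case obtains the cleaner system $\Pi_+\eta_+=0$, $\Pi_-\eta_-=0$, $\Pi_-\eta_++\Pi_+\eta_-=0$; you then invoke Weyl's lemma and the fact that a function simultaneously holomorphic and antiholomorphic is constant. Your approach is more conceptual and makes the complex-analytic structure transparent (the $\Pi_\pm$ operators are already the natural ones in \eqref{kinetic}); the paper's approach is more elementary in that it avoids elliptic regularity entirely, working purely with first-order real relations and one-variable distributions. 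Both implicitly use connectedness of $\Omega$ for the ``constant'' step, which you note and the paper leaves tacit.
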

\begin{proof}
The kinetic energy may be rewritten as
\begin{align*}
e_{kin} 
& = \frac{1+\nu}{2}|\partial_x\eta_+ + \partial_x \eta_- |^2 + \frac{1+\nu}{2} |\partial_y\eta_+ - \partial_y\eta_-|^2 \\
&\quad +\frac{1-\nu}{2}|\partial_y\eta_+ + i \partial_x\eta_-|^2 + \frac{1-\nu}{2}|\partial_x\eta_+ + i \partial_y\eta_- |^2.
\end{align*}
In particular it is non-negative, and  $e_{kin}=0$ implies
\begin{equation*}
\partial_x[\eta_+ +\eta_-]=0,\quad\partial_y[\eta_+-\eta_-]=0,\quad\text{and }(\partial_x +i\partial_y)\eta_+ =0.
\end{equation*}
Thus there exist one-dimensional distributions $u,v\in\mathcal D'(\mathbb R)$ such that
\begin{equation*}
\eta_+ =u (y) + v(x),\quad \eta_- =u(y) -v(x),\quad iu'(y)+v'(x) =0.
\end{equation*}
Differentiating the last equation, we deduce that $u''=v''=0$. Therefore $u$ and $v$ are affine functions with $u'=iv'$:
\begin{equation*}
u=u_0 +i \alpha y,\quad v =v_0 +\alpha x,\quad\text{ for some }\alpha\in\mathbb C,
\end{equation*}
and we obtain the desired conclusion with $c_+ =u_0 + v_0$ and $c_- = u_0-v_0$.
\end{proof}

As a consequence of Proposition~\ref{zerokin}, there is no hope for a general inequality of the form $\int e_{kin} \geq c \| \nabla \eta \|_{L^2}^2$ to be valid. However, we have the following:

\begin{lem}\label{weakcoerc}
Let $\Omega$ be an open subset of $\mathbb R^2$. Let $W\subset H^1(\Omega)^2$ be a closed affine subspace such that
\begin{equation*}
W\cap \left\lbrace (c_+ +\alpha z , c_- -\alpha \bar z)\colon c_\pm,\alpha\in\mathbb C \right\rbrace =\emptyset.
\end{equation*}
Then there exists $c>0$ (depending on $\Omega$ and $W$) such that
\begin{equation*}
\int_\Omega e_{kin}(\eta) \geq c \|\eta\|^2_{H^1}
\end{equation*}
for every $\eta\in W$.
\end{lem}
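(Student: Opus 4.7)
I propose arguing by contradiction and compactness. Suppose the conclusion fails; then for every $n$ there is $\eta_n\in W$ with $\int_\Omega e_{kin}(\eta_n)<\tfrac1n\|\eta_n\|_{H^1}^2$. Proposition~\ref{zerokin} shows that $\int e_{kin}$ is a continuous non-negative quadratic form on $H^1(\Omega)^2$, hence weakly lower semicontinuous; its associated symmetric bilinear form $B$ obeys Cauchy--Schwarz, and the closed affine set $W$ is weakly closed. I split the analysis according to whether $(\eta_n)$ is bounded.

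If $\|\eta_n\|_{H^1}$ stays bounded, extract a weakly convergent subsequence $\eta_n\rightharpoonup\eta\in W$. By weak lower semicontinuity $\int e_{kin}(\eta)=0$, so Proposition~\ref{zerokin} gives $\eta=(c_+ +\alpha z,c_- -\alpha\bar z)$, contradicting $W\cap N=\emptyset$.

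The delicate case is $\|\eta_n\|_{H^1}\to\infty$. The key technical tool is a Garding-type inequality
\begin{equation*}
c_0\,\|\eta\|_{H^1}^2\le \int_\Omega e_{kin}(\eta)+K\,\|\eta\|_{L^2}^2\qquad\forall\,\eta\in H^1(\Omega)^2,
\end{equation*}
which follows from strong (Legendre--Hadamard) ellipticity of $\int e_{kin}$. From the sum-of-squares representation in Proposition~\ref{zerokin}, the principal Fourier symbol is positive definite for $\xi\neq 0$: the four conditions $\xi_1(\hat\eta_++\hat\eta_-)=\xi_2(\hat\eta_+-\hat\eta_-)=\xi_2\hat\eta_++i\xi_1\hat\eta_-=\xi_1\hat\eta_++i\xi_2\hat\eta_-=0$ force $\hat\eta=0$. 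Normalize $\tilde\eta_n:=\eta_n/\|\eta_n\|_{H^1}$; the Garding bound combined with $\int e_{kin}(\tilde\eta_n)<1/n$ forces $\|\tilde\eta_n\|_{L^2}$ to stay bounded below, so a subsequence converges weakly in $H^1$ and strongly in $L^2$ to some $\eta^{\ast}\neq 0$. Applying Garding to $\tilde\eta_n-\eta^\ast$ and using $B(\tilde\eta_n,\eta^\ast)\to B(\eta^\ast,\eta^\ast)$ upgrades this to strong $H^1$-convergence, so $\|\eta^\ast\|_{H^1}=1$ and $\eta^\ast\in N$. Fixing $\eta_0\in W$ and writing $\eta_n=\eta_0+\xi_n$ with $\xi_n$ in the closed linear subspace $W_0:=W-\eta_0$, the same normalization yields $\eta^\ast\in W_0$.

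The final contradiction uses $0\neq\eta^\ast\in N\cap W_0$: along the ray $\eta_0+t\eta^\ast\in W$, Cauchy--Schwarz forces $B(\eta_0,\eta^\ast)=0$, and hence $\int e_{kin}(\eta_0+t\eta^\ast)\equiv \int e_{kin}(\eta_0)$ while $\|\eta_0+t\eta^\ast\|_{H^1}\to\infty$, which by itself would already defeat the conclusion. Closing the argument therefore requires $N\cap W_0=\{0\}$; this is automatic in the Dirichlet setting where the lemma is applied (Theorem~\ref{thm1}), because there $W_0=H^1_0(\Omega;\C)^2$ and any element of $N$ is an (anti)holomorphic polynomial, which cannot vanish on $\partial\Omega$ without being identically zero. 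The main obstacle of the proof is the Garding inequality, which powers the upgrade from weak to strong $H^1$-convergence in the unbounded case.
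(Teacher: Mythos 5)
Your argument hinges on the Garding-type inequality $c_0\|\eta\|_{H^1}^2\le \int_\Omega e_{kin}(\eta)+K\|\eta\|_{L^2}^2$ for \emph{all} $\eta\in H^1(\Omega)^2$, and the justification you give for it is not valid: the Legendre--Hadamard condition (positivity of the principal symbol on rank-one directions, which you verify correctly) yields Garding's inequality only for $H^1_0(\Omega)$, by zero-extension and Plancherel, not for functions with free boundary values. Proposition~\ref{zerokin} shows precisely that the integrand is \emph{not} pointwise coercive --- its pointwise kernel contains the gradients of $(\alpha z,-\alpha\bar z)$ --- so coercivity over all of $H^1(\Omega)$ is a boundary-sensitive matter (it amounts to a quantitative non-cancellation statement between holomorphic and antiholomorphic fields on $\Omega$, a Friedrichs-type inequality), and it certainly is not automatic for the arbitrary open set $\Omega\subset\mathbb R^2$ allowed in the statement. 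The same remark applies to your use of strong $L^2$ convergence of the normalized sequence: the compact embedding $H^1(\Omega)\hookrightarrow L^2(\Omega)$ is unavailable for a general open $\Omega$. Since both ingredients power the whole unbounded case (the lower bound on $\|\tilde\eta_n\|_{L^2}$, the nonvanishing of $\eta^\ast$, and the strong $H^1$ upgrade), that case is not established, and this is a genuine gap rather than a routine omission.

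That said, your closing observation is substantive: with only $W\cap N=\emptyset$, where $N:=\{(c_++\alpha z,c_--\alpha\bar z)\}$, the conclusion can indeed fail along rays $\eta_0+t\,n$ with $0\neq n\in N\cap W_0$, $W_0$ the direction space of $W$ (e.g.\ $W=\eta_0+N$), so some hypothesis like $N\cap W_0=\{0\}$ is needed; this is exactly what holds in the Dirichlet application of Theorem~\ref{thm1}, where $W_0=H^1_0(\Omega;\mathbb C)^2$. The paper's own proof is the short route you use in your bounded case --- normalize, extract a weak limit, use convexity/weak lower semicontinuity of $\int e_{kin}$ and Proposition~\ref{zerokin} to contradict $W\cap N=\emptyset$ --- and it glosses over the fact that rescaling does not preserve the affine class $W$, which is precisely the point you are wrestling with. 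So you have correctly diagnosed where the stated hypotheses are too weak, but as a proof your proposal both rests on an unproved key inequality and, by your own admission, does not close under the hypotheses of the lemma; note also that in the bounded regime none of the heavy machinery is needed, since any weak limit (even $0$) already lies in $W\cap N$.
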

\begin{proof}
We argue by contradiction. If the conclusion does not hold, then (using the homogeneity of the involved quantities) there exists a sequence $(\eta^k)\subset W$ such that
\begin{equation*}
\|\eta^k\|_{H^1}=1,\quad \int e_{kin}(\eta^k) \longrightarrow 0.
\end{equation*}
Up to considering a subsequence, and since $W$ is weakly closed, we may assume that $\eta^k$ converges $H^1$-weakly to $\eta\in W$. On the other hand, since the kinetic energy is convex (as a non-negative quadratic form), it holds
\begin{equation*}
\int e_{kin}(\eta)\leq \liminf \int e_{kin}(\eta^k) = 0,
\end{equation*}
so that by Lemma~\ref{zerokin}, $\eta_\pm =c_\pm +\alpha (y \pm ix)$, thus contradicting the assumption on $W$.
\end{proof}

In particular, we may impose Dirichlet boundary conditions ensuring that the assumption of Lemma~\ref{weakcoerc} is satisfied. For instance, the following result will allow us to construct -- in Section~\ref{s_entire} below -- physically relevant `radial vortex' solutions.

%\begin{prop}\label{prop_exmin}
%Let $R>0$ and $\alpha_\pm\in\mathbb C$ with $\alpha_+\neq \alpha_-$. Then $E$ admits a minimizer in the space
%\begin{equation*}
%W=\left\lbrace \eta\in H^1(B_R)^2 \colon \eta_\pm =\alpha_\pm e^{\pm i\theta}\text{ on }\partial B_R \right\rbrace.
%\end{equation*}
%\end{prop}
\begin{proof}[Proof of Theorem~\ref{thm1}]
The first assertion follows from Proposition~\ref{zerokin} and Lemma~\ref{weakcoerc}.  In the case $\Omega=B_R$, $g_\pm=\alpha_\pm e^{in_\pm\theta}$, 
 it suffices to show that for any $c_\pm,\alpha\in \mathbb C$,
\begin{equation*}
\eta_\pm =c_\pm \pm \alpha re^{\pm i\theta}\notin W,
\end{equation*}
which follows from the uniqueness of Fourier decomposition on $\partial B_R$.
\end{proof}

\section{Entire vortex solutions}\label{s_entire}

In this section we study symmetric vortices, that is, solutions of the form
\begin{equation*} 
\eta_\pm (re^{i\theta})=f_\pm(r)e^{in_\pm \theta},\quad n_\pm\in\Z,
\end{equation*}
where $f_\pm$ are real-valued functions.
However, because of the coupling term in the kinetic energy, and in contrast with other coupled systems of Ginzburg-Landau equations \cite{alamagao13}, not all values of $n_\pm\in\Z$ are natural. 

Indeed, the existence of such symmetric solutions is related to invariance properties of the energy. More specifically, for any $n_\pm\in\Z$, one may define an action of $\mathbb S^1$ on functions $\eta_\pm(z)$:
\begin{equation*}
(\omega\cdot\eta_\pm)(z)=\omega^{n_\pm}\eta(\omega^{-1}z),\quad\omega\in\mathbb S^1.
\end{equation*}
A straightforward computation shows that
\begin{equation*}
\begin{split}
E(\eta)-E(\omega\cdot\eta)&=\int ( [1-\omega^{n_+-n_--2}]\Pi_-\eta_+)\cdot(\Pi_+\eta_-) \\
&\quad +\nu\int ( [1-\omega^{n_+-n_-+2}]\Pi_+\eta_+)\cdot(\Pi_-\eta_-) \\
&\quad+ \kappa^2\nu \int ([1-\omega^{2(n_+-n_-)}]\eta_+^2)\cdot(\eta_-^2 ).
\end{split}\end{equation*}
Hence we see that, in the case $\nu=0$, the energy is invariant if and only if 
\begin{equation*}
n_+=n_-+2.
\end{equation*}
In the case $\nu\neq 0$, the energy can not be invariant, and the only invariance that can be expected is for the subgroup $\mathbb U_4\subset\mathbb S^1$, which explains why vortices with square symmetry are predicted \cite{heebagterberg99,shiraishimaki99}.

In view of the above discussion, we consider from now on the case $\nu=0$. Moreover, since we will be interested in solutions defined in the whole plane $\R^2$, the parameter $\kappa$ can be scaled out, and we assume also $\kappa=1$.
In that case the Euler-Lagrange equations read
\begin{equation}\label{systeta}
\begin{gathered}
\Delta \eta_- +\frac 12 \Pi_-^2 \eta_+  
=  \eta_- (|\eta_- |^2-1) + 2\eta_- | \eta_+ |^2,\\
\Delta \eta_+  +\frac 12 \Pi_+^2 \eta_- 
= \eta_+ (|\eta_+ |^2-1) + 2\eta_+ | \eta_- |^2.
\end{gathered}
\end{equation}
in terms of $f_\pm$ defined by \eqref{fpm}, and using the notation $\Delta_r f = r^{-1}(rf')'=f''+r^{-1}f'$,
the system \eqref{systeta} takes the form,
\begin{equation}\label{systf1}
\begin{aligned}
\Delta_r f_- & -\frac{n^2}{r^2}f_- +\frac 12 \left(\Delta_r f_+ +2\frac{n+1}{r}f_+'+\frac{n(n+2)}{r^2}f_+ \right) \\
&\qquad =f_-(|f_-|^2-1)+2f_-f_+^2,\\
\Delta_r f_+ & -\frac{(n+2)^2}{r^2}f_+ +\frac 12 \left(\Delta_r f_- -2\frac{n+1}{r}f_-' + \frac{n(n+2)}{r^2}f_- \right) \\
&\qquad = f_+(|f_+|^2-1)+2f_+f_-^2.
\end{aligned}
\end{equation}

In the following we will show the existence of entire solutions of \eqref{systf1} with $n=-1$, that is equivariant solutions of the form
\begin{equation} \label{fpm}
\eta_-(re^{i\theta})=f_-(r)e^{- i\theta},\qquad \eta_+(re^{i\theta})=f_+(r)e^{+ i\theta},
\end{equation}
where $f_\pm$ are real-valued functions. This is the choice of degrees made in \cite{heebagterberg99}, in the expectation that these solutions are the ``most stable''.  In fact, the choice $n=-1$ simplifies the equations by eliminating a troublesome first order cross term in each equation.  Existence of entire equivariant solutions for $n\neq -1$ remains an open problem.

With the choice $n=-1$, the kinetic energy becomes
\begin{equation}\label{ekinf}
e_{kin} =|f'|^2 +\frac{1}{r^2}|f|^2 
+ \left(f_-'+\frac 1r f_- \right)\left(f_+'+\frac 1r f_+ \right),
\end{equation}
where $|f'|^2=(f_-')^2+(f_+')^2$ and $|f|^2=f_-^2+f_+^2$.
Moreover, the system \eqref{systeta} reads
\begin{equation}\label{systf}
\begin{aligned}
\Delta_r f_- -\frac{1}{r^2}f_- +\frac 12\left( \Delta_r f_+ -\frac{1}{r^2}f_+ \right) 
& =f_-(|f_-|^2-1)+2f_-f_+^2,\\
\Delta_r f_+  -\frac{1}{r^2}f_+ +\frac 12 \left(\Delta_r f_- - \frac{1}{r^2}f_- \right) 
& = f_+(|f_+|^2-1)+2f_+f_-^2.
\end{aligned}
\end{equation}
%We have used the notation $\Delta_r f = r^{-1}(rf')'=f''+r^{-1}f'$.

Note that the continuity of $\eta_\pm$ forces $f_\pm$ to satisfy homogeneous boundary conditions at the origin:
\begin{equation}\label{fpm0}
f_-(0)=f_+(0)=0.
\end{equation}
In fact these conditions \eqref{fpm0} are automatically satisfied by any bounded solutions of \eqref{systf}.
%The boundary conditions at $\infty$ should minimize the potential energy. Since $e_{pot}(\eta)\geq 0$, with equality if and only if $\eta_+=0$ and $|\eta_-|=1$, or $\eta_-=0$ and $|\eta_+|=1$, 
As for boundary conditions at $\infty$ we impose, in agreement with \eqref{symbc},
\begin{equation}\label{bcvort}
\lim_{r\to\infty} (f_-,f_+)=(1,0).
\end{equation}
%Hence there is a `dominant' component $f_-$ and an `admixed' component $f_+$ (following the terminology of \cite{heebagterberg99}). Solutions with dominant $f_+$ and admixed $f_-$ are obtained simply by exchanging the two components.

The strategy to obtain entire solutions of \eqref{systf}-\eqref{bcvort} is standard: we first obtain solutions in balls $B_R$ by direct minimization, and then let $R\to\infty$. We denote by $\mathcal H_R$ the admissible energy space for vortex configurations in $B_R$:
\begin{equation}\label{H_R}
\begin{aligned}
\mathcal H_R & =\left\lbrace \text{ real-valued } (f_-,f_+) \colon \eta_\pm=f(r)e^{\pm i\theta}\in H^1(B_R) \right\rbrace \\
& = \left\lbrace \text{ real-valued } (f_-,f_+)\colon \int_0^R \left( |f'|^2 + \frac{1}{r^2}|f|^2\right)rdr <\infty\right\rbrace.
\end{aligned}
\end{equation}
We also denote by $\mathcal H^{bc}_R$ the vortex configurations in $B_R$, having the right boundary conditions at $R$, and by $H_R^0$ the admissible perturbations, i.e. with zero boundary conditions at $R$:
\begin{gather}
\mathcal H_R^{bc} = \left\lbrace (f_-,f_+)\in\mathcal H_R \colon f_-(R)=1,\: f_+(R)=0 \right\rbrace,\label{H_Rbc}\\
\mathcal H_R^{0} = \left\lbrace (\varphi_-,\varphi_+)\in\mathcal H_R \colon \varphi_-(R)=\varphi_+(R)=0 \right\rbrace. \label{H_R0}
\end{gather}

To obtain entire solutions of \eqref{systf}-\eqref{bcvort}, we will need two kinds of \textit{a priori} estimates on solutions in $\mathcal H_R$: an $L^\infty$ bound, and a bound on the potential energy.

\begin{lem}\label{lem_apriori}
Let $f_\pm\in\mathcal H^{bc}_R$ solve \eqref{systf} in $(0,R)$, with $f_-(R)=1$, $f_+(R)=0$.
 Then it holds
 \begin{equation}\label{potbound}
2\int_0^R e_{pot}\, rdr \leq 1.
\end{equation}
and
\begin{equation}\label{unifbound}
f_-^2+f_+^2\leq 3 \quad \text{in }(0,R).
\end{equation}
If in addition we know that $f_-\ge 0$ and $f_+\le 0$ in $(0,R)$, then we have 
\begin{equation}\label{unifbound2}
f_-^2+f_+^2\leq 1 \quad \text{in }(0,R).
\end{equation}
\end{lem}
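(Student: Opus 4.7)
The plan is to prove the three assertions separately: the potential bound \eqref{potbound} via a Pohozaev-type identity, and the pointwise bounds \eqref{unifbound}, \eqref{unifbound2} via the maximum principle on $u := f_-^2 + f_+^2$.

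For \eqref{potbound}, I would multiply the first equation of \eqref{systf} by $r^2 f_-'$ and the second by $r^2 f_+'$ and integrate over $(0,R)$. For each diagonal integral $\int_0^R r^2 f_\alpha'[\Delta_r f_\alpha - f_\alpha/r^2]\,dr$, a single integration by parts causes the interior kinetic contributions to cancel, leaving only the boundary term $\tfrac{R^2}{2}(f_\alpha'(R))^2 - \tfrac{1}{2}[f_\alpha^2]_0^R$. The cross integrals $\int_0^R r^2 f_\mp'[\Delta_r f_\pm - f_\pm/r^2]\,dr$ require two integrations by parts, but thanks to $f_+(R)=0$, $f_\pm(0)=0$, and the radial regularity $f_\pm'(0)=0$, all interior contributions cancel and only the single boundary term $\tfrac{R^2}{2}f_-'(R)f_+'(R)$ survives. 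On the right-hand side, writing $\tfrac12(f_-' \partial_{f_-}V + f_+'\partial_{f_+}V) = \tfrac12\tfrac{d}{dr}V(f_-, f_+)$ and integrating by parts once gives $\tfrac{R^2}{2}V(R) - \int_0^R rV\,dr$, with $V(R) = \tfrac{1}{2}$. Equating both sides, the boundary contributions assemble into the positive semi-definite quadratic form
\begin{equation*}
\tfrac{R^2}{2}\bigl[(f_-'(R))^2 + f_-'(R)f_+'(R) + (f_+'(R))^2\bigr] = \tfrac{R^2}{4}\bigl|f'(R)\bigr|^2 + \tfrac{R^2}{4}\bigl(f_-'(R)+f_+'(R)\bigr)^2 \ge 0,
\end{equation*}
which may be dropped to conclude $\int_0^R r\bigl(V-\tfrac12\bigr)\,dr \leq \tfrac12$, yielding \eqref{potbound} for the excess potential energy density.

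For the pointwise bounds, I note that the highest-order coupling matrix $A = \bigl(\begin{smallmatrix}1 & 1/2 \\ 1/2 & 1\end{smallmatrix}\bigr)$ is invertible with $\det A = 3/4$, and inverting it in \eqref{systf} yields $\Delta_r f_\pm - f_\pm/r^2 = \tfrac{1}{3}(4N_\pm - 2N_\mp)$, where $N_\pm$ denote the right-hand sides of \eqref{systf}. Combining with $\Delta_r u = 2|f'|^2 + 2(f_-\Delta_r f_- + f_+\Delta_r f_+)$, a direct calculation yields
\begin{equation*}
\tfrac{3}{2}\Delta_r u = 3|f'|^2 + \tfrac{3u}{r^2} + g(p), \qquad g(p) := 8p^2 + (4-6u)p + (4u^2 - 4u),
\end{equation*}
with $p := f_-f_+$. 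At any interior maximum of $u$, $\Delta_r u \leq 0$ while the first three terms are nonnegative, so $g(p) \leq 0$; but $g$ has positive leading coefficient and discriminant $-92u^2 + 80u + 16$, which becomes strictly negative once $u \geq 3$ (indeed already for $u > (10+8\sqrt{3})/23 \approx 1.04$), so $g$ would be everywhere positive, a contradiction. Combined with the boundary value $u(R)=1$, this gives \eqref{unifbound}.

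For \eqref{unifbound2}, the sign hypothesis $f_-\geq 0$, $f_+\leq 0$ forces $p\leq 0$. For $u\geq 2/3$ the unconstrained minimizer $p^\ast = (3u-2)/8$ of $g$ is strictly positive, so $g$ is monotone decreasing on $(-\infty,0]$ and attains its restricted minimum at $p=0$, with $g(0) = 4u(u-1)\geq 0$ as soon as $u\geq 1$. The same maximum-principle argument, together with $u(R)=1$, yields $u\leq 1$ throughout $(0,R)$. The main obstacle is the Pohozaev computation for \eqref{potbound}: I need to track the repeated integrations by parts carefully enough to confirm that all interior contributions from the cross terms cancel and only a nonnegative boundary quadratic form is left.
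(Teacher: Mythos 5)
Your proposal is correct, but only the Pohozaev part and the proof of \eqref{unifbound2} follow the paper's route; your proof of \eqref{unifbound} is genuinely different. For \eqref{potbound} you do the same computation as the paper (multiply by $r^2f_\pm'$, integrate, and discard the boundary quadratic form $\tfrac{R^2}{2}\bigl[(f_-')^2+f_-'f_+'+(f_+')^2\bigr](R)\ge 0$); you are in fact more careful than the text about the normalization, since the bound holds for the potential renormalized to vanish at $(1,0)$, which is what the paper intends (and what it uses in Step~3 of the proof of Theorem~\ref{thm2}). For \eqref{unifbound} the paper stays at the weak level: it tests \eqref{systfweak} with $\varphi_\pm=f_\pm V$, where $V$ is a truncation of $\max(f_-^2+f_+^2+f_-f_+-3/2,0)$, and uses the sign inequalities \eqref{Depotff}--\eqref{ekin++} to conclude $\int_0^R(V')^2r\,dr\le 0$, hence $f_-^2+f_+^2+f_-f_+\le 3/2$ and the bound $3$. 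You instead invert the principal coupling matrix (your formula $\Delta_rf_\pm-f_\pm/r^2=\tfrac13(4N_\pm-2N_\mp)$ is correct), derive $\tfrac32\Delta_r u=3|f'|^2+3u/r^2+g(p)$ with $g(p)=8p^2+(4-6u)p+4u^2-4u$ and discriminant $-92u^2+80u+16$ (I checked the algebra), and run a classical maximum principle; this is arguably cleaner and even yields the sharper bound $u\le(10+8\sqrt3)/23<3$. The trade-off is regularity: the paper's truncation argument applies directly to $H^1$ weak solutions, and the paper deduces smoothness only \emph{after} (and from) the $L^\infty$ bound, whereas your pointwise argument needs $f_\pm$ smooth in $(0,R)$ before evaluating $\Delta_r u$ at an interior maximum. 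There is no circularity, but you should add one line: after the same diagonalization the radial system is a regular semilinear ODE system on compact subintervals of $(0,R)$, so $H^1_{loc}$ weak solutions are smooth in $(0,R)$ by a standard bootstrap with no $L^\infty$ information, and $u$ is continuous on $[0,R]$ with $u(0)=0$, $u(R)=1$ (as in Lemma~\ref{lem_H}), so any value exceeding your threshold is attained at an interior maximum. Finally, your proof of \eqref{unifbound2} (restricting $g$ to $p=f_-f_+\le 0$, where its minimum on that half-line is $g(0)=4u(u-1)$) is in substance the paper's own argument, which at a positive interior maximum of $W=u-1$ obtains $0\ge\Delta_r W\ge 2W(W+1)>0$.
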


\begin{proof}[Proof of the $L^\infty$ estimate \eqref{unifbound} and \eqref{unifbound2}:]
We use the weak formulation of the system \eqref{systf}. That is, for any test functions $\varphi_\pm\in \mathcal H_R^0$, it holds
\begin{equation}\label{systfweak}
\begin{split}
&\int_0^R \Bigg\lbrace f_-'\varphi_-' +\frac{1}{r^2}f_-\varphi_- + f_+'\varphi_+' +\frac{1}{r^2}f_+\varphi_+ \\
&\quad\quad +\frac{1}{2}(f_-' +\frac 1r f_-)(\varphi_+' +\frac 1r \varphi_+) + \frac 12 (f_+'+\frac 1r f_+)(\varphi_-'+\frac 1r \varphi_-)\Bigg\rbrace rdr\\
&\quad = -\frac 12 \int_0^R \left[ De_{pot}(f)\cdot\varphi \right] rdr,
\end{split}
\end{equation}
where
\begin{equation}\label{Depotf}
\frac 12 De_{pot}(f)\cdot\varphi = (2f_+^2 + f_-^2-1)f_-\varphi_- + (2f_-^2 + f_+^2-1)f_+\varphi_+.
\end{equation}
We apply that weak formulation to test functions of the form $\varphi_\pm =f_\pm V$, where $V\geq 0$ will be chosen appropriately later on. We find 
\begin{equation*}
\begin{split}
&\int_0^R\Bigg\lbrace
e_{kin}(f)V +\frac 12 (f_-^2+f_+^2 +f_+f_-)'V' +\frac 1r f_+f_-V'
\Bigg\rbrace rdr \\
&\quad = -\frac 12\int_0^R V \left( De_{pot}(f)\cdot f\right) rdr.
\end{split}
\end{equation*}
Integrating by parts, we rewrite that last equation as
\begin{equation}\label{weakV}
\begin{split}
&\int_0^R \Bigg\lbrace  \left(e_{kin}-\frac 1r(f_-f_+)'\right)V +\frac 12 (f_-^2+f_+^2 +f_+f_-)'V'\Bigg\rbrace rdr \\
&\quad = -\frac 12 \int_0^R V \left( De_{pot}(f)\cdot f\right) rdr.
\end{split}
\end{equation}
Next we notice that
\begin{equation}\label{Depotff}
\begin{split}
De_{pot}(f)\cdot f & 
= (f_-^2 + f_+^2 -1)(f_-^2+f_+^2)+2f_-^2f_+^2 \\
&\geq 0 \qquad\text{if }f_-^2 + f_+^2\geq 1,
\end{split}
\end{equation}
and
\begin{equation}\label{ekin++}
\begin{split}
e_{kin}-\frac 1r (f_-f_+)'& = (f_-')^2 +(f_+')^2 +f_-'f_+' + \frac{1}{r^2}\left( f_-^2+ f_+^2 +f_-f_+\right)\\ &\geq 0.
\end{split}
\end{equation}
Now we can choose the function $V$. We define, for an arbitrary $M>0$,
\begin{equation}\label{UV}
U=\max( f_-^2+f_+^2+f_+f_- -3/2, 0)\quad\text{and } V=\min (U,M).
\end{equation}
 It is easy to check that $\varphi_\pm = f_\pm V\in \mathcal H_R$ are indeed admissible test functions \eqref{H_R0}. Plugging \eqref{UV} into \eqref{weakV}, and using the inequalities \eqref{Depotff} and \eqref{ekin++}, we obtain
\begin{equation}\label{ineqV}
\int_0^R (V')^2 rdr \leq 0,
\end{equation}
and therefore $V=0$ a.e. We deduce that
\begin{equation*}
f_+^2+f_-^2+f_+f_-\leq 3/2,
\end{equation*}
which obviously implies the $L^\infty$ estimate \eqref{unifbound}. 

In case that $f_-\ge 0$ and $f_+\le 0$ in $(0,R)$, let $W=f_-^2+f_+^2-1$.  If $W$ attains a positive maximum at $r\in(0,R)$, we easily compute
\begin{align*}
0\ge \Delta_r W(r) &\geq 2 f_-\Delta_r f_- + 2 f_+\Delta_r f_+\\
& = 2W(W+1) + 4f_-^2f_+^2 -f_-f_+(3f_-^2+3f_+^2 -2)+\frac{2}{r^2}(f_-^2+f_+^2) \\
& \geq 2W(W+1)>0,
\end{align*}
thus proving \eqref{unifbound2}.
\end{proof}

\begin{proof}[Proof of the potential energy estimate \eqref{potbound}:]
The potential energy estimate is classically proven using a Pohozaev identiy. The Pohozaev identity is obtained by multiplying the first line of \eqref{systf} by $r^2f_-'$ and the second line by $r^2 f_+'$, and adding them. The resulting equality can be rewritten as
\begin{equation}\label{poho}
\begin{gathered}
\left[r^2(f_-')^2+r^2(f_+')^2+r^2f_+'f_-'-f_-^2-f_+^2 - f_-f_+
\right]'\\
\qquad =r^2 \left[e_{pot}\right]' = \left[r^2 e_{pot} \right]'-2r(e_{pot}).
\end{gathered}
\end{equation}
Integrating \eqref{poho} from 0 to $R$ and using the boundary conditions $f_\pm(0)=0$, $f_\pm(R)=(0,1)$, we obtain
\begin{equation}\label{potint}
2\int_0^R (e_{pot})\, rdr =1 -R^2\left[ f_-'(R)^2+f_+'(R)^2 + f_-(R)f_+(R)\right]\leq 1,
\end{equation}
thus proving \eqref{potbound}.
\end{proof}

With the \textit{a priori} estimates of Lemma~\ref{lem_apriori} at hand, we are ready to prove Theorem~\ref{thm2}.

\smallskip

%\begin{thm}\label{thm_entirevort}
%There exist smooth bounded solutions $f_\pm:[0,\infty)\to\R$ of \eqref{systf}-\eqref{bcvort}.
%\end{thm}
\begin{proof}[Proof of Theorem~\ref{thm2}]
We prove here the existence part of Theorem~\ref{thm2}. The asymptotic expansion \eqref{asymptot1} is then a consequence of Theorem~\ref{asymptotics}, which is proven in Section~\ref{s_asympt}.

 We proceed in three steps: first we show the existence of solutions in finite balls, then let the radii tend to $+\infty$ and obtain entire solutions of \eqref{systf} , and eventually we show that those solutions satisfy the boundary conditions \eqref{bcvort}. The first two steps are fairly standard after the preliminary work in Section~\ref{s_exist} and the uniform bound of Lemma~\ref{lem_apriori}. The last step classically relies on the potential energy bound of Lemma~\ref{lem_apriori}, but requires an extra argument that was not needed in previous related works (as e.g. \cite{alamagao13}).

\textbf{Step 1:} Existence of solutions $f_\pm^R$ in $(0,R)$ with $f^R_\pm(R)=(0,1)$. 

By Lemma~\ref{weakcoerc}, the kinetic energy functional is coercive on the closed affine (real) subspace
\begin{equation}\label{spaceminR}
\left\lbrace \eta_\pm =f_\pm(r)e^{\pm i\theta} \colon f\in \mathcal H_R^{bc} \right\rbrace \subset H^1(B_R)^2.
\end{equation}
Therefore the direct method of the calculus of variation ensures the existence of a minimizer $\eta_\pm = f_\pm^R(r)e^{\pm i\theta}$. The functions $f_\pm$ solve \eqref{systf} in (0,R). Moreover, $f_\pm\in \mathcal H_R^{bc}$ and Lemma~\ref{lem_apriori} applies: it holds
\begin{equation*}
|f|^2 \leq 3,\quad 2\int_0^R e_{pot}(f) \, rdr \leq 1.
\end{equation*}
Note that the $L^\infty$ bound \eqref{unifbound} ensures that $\Delta_r f_\pm\in L^\infty_{loc}$, and therefore by elliptic regularity $f_\pm$ are smooth.

\textbf{Step 2:} Taking the limit as $R\to\infty$. 

We regard $f_\pm^R$ as being defined on $(0,\infty)$ by setting $f_\pm^R \equiv (0,1)$ in $(R,\infty)$. Thanks to the $L^\infty$ bound $|f|^2\leq 3$, elliptic estimates ensure that $(f_\pm^R)'$ is uniformly bounded in any compact interval of $(0,\infty)$. Hence we may extract a converging subsequence
\begin{equation*}
f_\pm^{R_n}\longrightarrow f_\pm \quad\text{ locally uniformly in }(0,R).
\end{equation*}
It follows that $f_\pm$ are smooth bounded solutions of \eqref{systf}.

\textbf{Step 3:} Boundary conditions \eqref{bcvort}. 

From the bound on the potential energy \eqref{potbound} and Fatou's lemma, we obtain that
\begin{equation*}
\int_0^\infty e_{pot} \, rdr <\infty.
\end{equation*}
We claim that this finite energy property implies that $\lim_{r\to\infty} e_{pot}=0$. To this end, remark that it holds $|f'_\pm(r)|\leq C(1+r)$, which is easily established using the uniform bound $|f_\pm|\leq 3$ together with the differential system \eqref{systf} satisfied by $f_\pm$. Now assume that there exists a subsequence $r_n\to\infty$ such that $e_{pot}(r_n)\geq \varepsilon>0$. We may assume in addition that $r_{n+1}-r_n\geq 1$. From $|f_\pm|\leq 3$ and $|f_\pm'|\leq C(1+r)$ we obtain that $|e_{pot}'|\leq C(1+r)$, and we deduce that there exists $\delta>0$ such that $e_{pot}\geq \varepsilon/2$ on $(r_n-\delta/r_n,r_n+\delta/r_n)$. But this would imply
\begin{equation*}
\int_0^\infty e_{pot}\, rdr \geq \frac\varepsilon 2\sum_n \int_{r_n-\delta/r_n}^{r^n+\delta/r_n} rdr =\frac\varepsilon 2\sum_n 2\delta =\infty,
\end{equation*}
which contradicts the finite energy property. Therefore it holds
\begin{equation*}
\lim_{r\to\infty} e_{pot} =0.
\end{equation*}
On the other hand, recall that $e_{pot}=0$ exactly at the points $(0,1)$ and $(1,0)$.
As a consequence, any converging subsequence $f_\pm(r_n)$ must converge to either $(0,1)$ or $(1,0)$. 

In fact only one of these two points can be such a limit: if there exists sequences $f_\pm(r^1_n)\to (0,1)$ and $f_\pm(r^2_n)\to (1,0)$, then using the continuity of $f_\pm$ one easily constructs a sequence $r^3_n\to\infty$ such that
\begin{equation*} \mathrm{dist}( f_\pm(r^3_n),\lbrace (0,1),(1,0)\rbrace)\geq 1/2.
\end{equation*}
But then one could extract a subsequence $f_\pm(r^3_{n'})\to \ell_\pm \notin \lbrace (0,1),(1,0)\rbrace$,  contradicting the fact that $\lim e_{pot}=0$. 

Therefore there is a unique possible limit for converging subsequences $f_\pm(r_n)$, and we conclude that the limit $f_\pm(\infty)$ exists and is either $(0,1)$ or $(1,0)$. Up to exchanging $f_+$ with $f_-$ (the equations are symmetric), we have the right boundary conditions at $\infty$.
\end{proof}

Now that we have the existence of entire vortices, we would like to investigate qualitative properties of the radial profiles $f_\pm$. The first natural question is whether or not they have a sign. In the classical one-component Ginzburg-Landau setting \cite{bbh}, as in other two-component models \cite{alamagao13}, existence of the radial profile components with a sign follow from a simple energy argument: replacing $f$ with $|f|$ or $-|f|$ does not increase the energy. In the present case however, this argument does not work, because of the coupling term in the kinetic energy.

If there do exist radial profiles with a sign, it is clear that $f_-$ should be positive since $f_-(\infty)=1$. On the other hand, due to the asymptotic expansion \eqref{asymptot1},  $f_+$ should be negative.
This is in agreement with numerical computations performed in \cite{shiraishimaki99}. In the next section we give arguments supporting the conjecture that $f_-\geq 0$ and $f_+\leq 0$. We consider a perturbed model and prove the existence of vortices with such signs.

\section{Vortex structure for a perturbed model}\label{s_perturb}

This section is devoted to proving Theorem~\ref{thm3}. We start by presenting and proving the main tools needed in the proof.

\subsection{Main ingredients}\label{ss_ingredients}

Recall that we consider the family of perturbed functionals \eqref{E_t} and we look for radial vortex solutions of the form
\begin{equation*}
\eta_+ = f_+(r)e^{i\theta},\quad \eta_- =f_-(r)e^{-i\theta}.
\end{equation*}
Then the energy \eqref{E_t} becomes
\begin{equation}\label{I_t}
I_t(f;R) := \int_0^R \left( |f'|^2 + \frac{1}{r^2}|f|^2 + t(f_-'+\frac 1r f_-)(f_+'+\frac 1r f_+) + e_{pot}\right)rdr,
\end{equation}
where $|f'|^2=(f_-')^2+(f_+')^2$ and $|f|^2=f_-^2+f_+^2$, and the corresponding Euler-Lagrange equations are \eqref{GLt}. 

 The solutions $f^t$ are obtained by perturbation of a solution $f^0$ of \eqref{GLt} for $t=0$, given by
\begin{equation*}
f_-^0 =f,\quad f_+^0 \equiv 0,
\end{equation*}
where $f$ is the classical Ginzburg-Landau radial vortex profile solving
\begin{equation}\label{fradialGL}
\Delta_r f - \frac{1}{r^2}f = f(f^2-1),\qquad f(0)=0,\quad f(\infty)=1.
\end{equation}
More specifically, the solution $f^t$ will be of the form
\begin{equation*}
f^t=f^0 + g^t,\qquad g_\pm^t(\infty)=0.
\end{equation*}
Perturbed solutions will be obtained through the implicit function theorem, and to this end we need a stability result. The space of admissible perturbation is
\begin{equation}
\mathcal H = \left\lbrace \varphi_\pm\in H^1_{loc}(0,\infty)\colon \eta=\varphi_\pm(r)e^{\pm i\theta}\in H^1(\mathbb R^2)\right\rbrace.
\end{equation}
Although the entire solution $f^0$ does not have finite energy $I_0$ in $(0,\infty)$, it makes sense to consider variations with respect to compact perturbations: for $\varphi_\pm\in C_c^\infty (0,\infty)$, such that $\mathrm{supp}\, \varphi_\pm\subset (0,R_0)$, it holds
\begin{equation*}
I_0(f^0+\varphi; R_0)-I_0(f^0;R_0)=
Q_0[\varphi] + o(\norm{\varphi}_{\mathcal H}^2),
\end{equation*}
where
\begin{equation}\label{Q0}
\begin{aligned}
Q_0[\varphi]&=\int_0^\infty \left\lbrace (\varphi_-')^2 +\frac{1}{r^2}\varphi_-^2 + (3f^2-1)\varphi_-^2 \right\rbrace\, rdr \\
&\quad +\int_0^\infty \left\lbrace (\varphi_+')^2 +\frac{1}{r^2}\varphi_+^2 +(2f^2-1)\varphi_+^2 \right\rbrace \, rdr.
\end{aligned}
\end{equation}
Note that $Q_0[\varphi]$ is well-defined for any $\varphi\in\mathcal H$. 

\begin{lem}\label{lem_stab}
There exists $\delta>0$ such that
\begin{equation}\label{stab}
Q_0[\varphi] \geq \delta \norm{\varphi}^2_{\mathcal H},
\end{equation}
for all $\varphi\in\mathcal H$.
\end{lem}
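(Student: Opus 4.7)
The plan exploits the fact that $Q_0[\varphi]$ splits as an independent sum
\begin{equation*}
Q_0[\varphi] = Q_0^-[\varphi_-] + Q_0^+[\varphi_+],
\end{equation*}
each piece depending on only one component, and the norm decomposes correspondingly as $\|\varphi\|_{\mathcal H}^2 = \|\varphi_- e^{-i\theta}\|_{H^1(\R^2)}^2 + \|\varphi_+ e^{i\theta}\|_{H^1(\R^2)}^2$. The task thus reduces to proving coercivity of each of $Q_0^\pm$ separately on its corresponding subspace.

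For $Q_0^-$, observe that this is precisely the restriction to perturbations $\varphi_-(r)e^{-i\theta}$ (real radial profile, same phase as the vortex) of the second variation, at the degree $-1$ classical vortex $u_0 = f(r)e^{-i\theta}$, of the one--component Ginzburg--Landau energy $\int (|\nabla u|^2 + \tfrac12 (|u|^2 - 1)^2)$. The restriction to real $\varphi_-$ in the $e^{-i\theta}$ angular sector decouples from all known zero modes of the linearization at $u_0$: the two translations $\partial_x u_0, \partial_y u_0$ live in the sectors $e^{0\cdot i\theta}$ and $e^{-2i\theta}$, while the rotation mode $iu_0$ would correspond to purely imaginary $\varphi_-$. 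Mironescu's detailed spectral analysis of this linearization \cite{mironescu95} shows that on this decoupled sector the operator has a strictly positive spectral gap, which yields the desired bound $Q_0^-[\varphi_-] \geq \delta_1 \|\varphi_- e^{-i\theta}\|_{H^1}^2$.

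For $Q_0^+$ the key step is a factorization using that the vortex profile $f$ is smooth and strictly positive on $(0,\infty)$ and solves $\tilde L f = 0$ with $\tilde L := -\Delta_r + r^{-2} + (f^2 - 1)$. Writing $\varphi_+ = f h$ and integrating by parts with the help of $\tilde L f = 0$ yields
\begin{equation*}
Q_0^+[\varphi_+] = (\tilde L \varphi_+, \varphi_+)_{L^2(r\, dr)} + \int_0^\infty f^2 \varphi_+^2 \, r\, dr = \int_0^\infty \bigl(f^2 (h')^2 + f^4 h^2 \bigr) r\, dr \geq 0,
\end{equation*}
with equality only when $h \equiv 0$, i.e.\ $\varphi_+\equiv 0$. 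This gives strict positivity of $Q_0^+$ on $\mathcal H_+ := \{\varphi_+ \colon \varphi_+ e^{i\theta} \in H^1(\R^2)\}$.

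To promote strict positivity into the coercive bound, rewrite
\begin{equation*}
Q_0^+[\varphi_+] = \|\varphi_+ e^{i\theta}\|_{H^1(\R^2)}^2 - 2\int_0^\infty (1-f^2) \varphi_+^2 \, r\, dr,
\end{equation*}
and note that since $1-f^2 \to 0$ at infinity (at rate $O(r^{-2})$ by the classical asymptotics of $f$), the quadratic form $\varphi_+ \mapsto \int (1-f^2)\varphi_+^2 \, r\, dr$ is compact on $\mathcal H_+$ (Rellich on any bounded ball, combined with smallness of $1-f^2$ outside). A standard minimizing-sequence argument then shows that $\mu := \inf_{\varphi_+\neq 0} Q_0^+[\varphi_+]/\|\varphi_+ e^{i\theta}\|_{H^1}^2$ is strictly positive: a minimizing sequence has, up to subsequence, a weak limit $\varphi_+^\star$; if $\varphi_+^\star \neq 0$ then the strict positivity of $Q_0^+$ forces $\mu > 0$, while if $\varphi_+^\star = 0$ then the compact term vanishes in the limit and $\mu = 1$. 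The main delicate point is the indefinite sign of the potential $2f^2 - 1$, which prevents a direct positivity argument; the factorization built from the positive zero mode $f$ of $\tilde L$ is the device that bypasses this.
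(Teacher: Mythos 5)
Your proposal is correct in substance, but it is organized differently from the paper's proof, and one of your two blocks rests on an overstated citation. The paper first reduces \eqref{stab} to the weaker bound $Q_0[\varphi]\geq\delta\norm{\varphi}^2_{L^2(rdr)}$, then notes that since $3f^2-1\geq 2f^2-1$ both diagonal blocks of \eqref{Q0} are bounded below by the single scalar form $\widetilde Q$ of \eqref{tildeQ}, and proves the $L^2$ bound for $\widetilde Q$ by combining Mironescu's nonnegativity \eqref{P} (which yields $\widetilde Q[\varphi]\geq\int f^2\varphi^2\,r\,dr$) with a concentration argument: a normalized sequence with $\widetilde Q\to 0$ would have to concentrate near $r=0$, which strong $L^2_{loc}$ convergence and $f(1)>0$ exclude. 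Your treatment of the $\varphi_+$ block is a genuinely self-contained variant of this: the substitution $\varphi_+=fh$, using the positive radial profile $f$, replaces the appeal to Mironescu (the form $P[i\varphi e^{i\theta}]$ is exactly the one you factor), and your weak-continuity/weak-lower-semicontinuity dichotomy for a normalized minimizing sequence replaces the paper's concentration argument; the weak continuity of $\varphi\mapsto\int(1-f^2)\varphi^2\,r\,dr$ does follow, as you say, from $1-f^2\to 0$ at infinity together with Rellich on balls. This buys you independence from Mironescu's theorem for that block, at the cost of a routine density argument (compactly supported profiles in $\mathcal H$) to justify the boundary terms in the integration by parts behind $Q_0^+[fh]=\int\left(f^2(h')^2+f^4h^2\right)r\,dr$.

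The soft spot is the $\varphi_-$ block. Mironescu's result, as invoked in the paper, gives only nonnegativity of the second variation; the observation that the translation and rotation zero modes live in other Fourier sectors does not by itself rule out a zero eigenvalue, or spectrum accumulating at $0$, for $-\Delta_r+r^{-2}+(3f^2-1)$ on the real, same-phase sector, so asserting a ``strictly positive spectral gap'' on the strength of \cite{mironescu95} alone is not justified as written. The claim is true, but to make it rigorous you would need to add that the essential spectrum of this operator starts at $2$ and that its kernel is trivial (for instance because it dominates $-\Delta_r+r^{-2}+(f^2-1)\geq 0$ plus the strictly positive multiplication by $2f^2$, so a kernel element would force $\int f^2\varphi_-^2\,r\,dr=0$). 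The simplest repair is the paper's own reduction: since $3f^2-1\geq 2f^2-1$, the $\varphi_-$ block is bounded below by the very form you already handled with the factorization and compactness argument, so the citation can be dropped entirely and your proof then goes through.
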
 

Part of Lemma~\ref{lem_stab}, namely the fact that $Q_0$ is non-negative, will be otained as a consequence of Mironescu's stability result \cite{mironescu95} for the classical one-component Ginzburg-Landau equation. To obtain the positive definiteness we will need an extra argument.

With Lemma~\ref{lem_stab} at hand, we will be able to construct the map $t\mapsto f^t$ as in Theorem~\ref{thm3}. The next step will be to obtain information on the sign of $f_+^t$ for $t>0$. This will be done mostly by examining the equation \eqref{bvph} satisfied by 
\begin{equation}\label{defh}
h:=\frac{d}{dt}\left[ f^t_+\right]_{t=0}.
\end{equation}
We will prove the following crucial result:
\begin{lem}\label{lem_h}
Let $h$ be a smooth function in $[0,\infty)$, satisfying the boundary value problem
\begin{gather}\label{bvph}
\Delta_r h - \frac{1}{r^2}h =(2f^2-1)h +\frac 12 f(1-f^2),\\
h(0)=0,\quad  \lim_{r\to\infty} h(r)=0.
\end{gather}
Then $h < 0$ in $(0,\infty)$. In addition, $h'(0)<0$.
\end{lem}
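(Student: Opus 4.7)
The plan is to prove the three assertions ($h\le 0$ first, then strict negativity on $(0,\infty)$, then $h'(0)<0$) in succession, combining the coercivity of the quadratic form $Q_0$ with the strong maximum principle and a Taylor expansion at the origin.

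For the sign $h\le 0$, I would use Lemma~\ref{lem_stab} applied to the test function $(0,h^+)$, where $h^+=\max(h,0)$. First I need $(0,h)\in\mathcal H$: near the origin, smoothness of $h(r)e^{i\theta}$ forces $h(r)=c_1 r+O(r^3)$, while at infinity the classical Ginzburg--Landau asymptotics give $\tfrac12 f(1-f^2)=\tfrac{1}{2r^2}+O(r^{-4})$, and balancing the linear terms in \eqref{bvph} shows $h(r)=O(r^{-2})$. Both decay rates yield $(0,h)\in\mathcal H$ and hence $(0,h^+)\in\mathcal H$. Multiplying \eqref{bvph} by $h^+\,r$, integrating on $(0,\infty)$, and integrating by parts (the boundary terms vanish by the decay just established and by $h(0)=0$), we obtain
\[
Q_0[(0,h^+)]=\int_0^\infty\!\Bigl[\,((h^+)')^2+\tfrac{1}{r^2}(h^+)^2+(2f^2-1)(h^+)^2\,\Bigr]r\,dr=-\!\int_0^\infty \tfrac12 f(1-f^2)\,h^+\,r\,dr.
\]
The right-hand side is nonpositive since $0<f<1$ on $(0,\infty)$, while by Lemma~\ref{lem_stab} the left-hand side is $\ge\delta\|(0,h^+)\|_{\mathcal H}^2$. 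Hence $h^+\equiv 0$ and $h\le 0$ everywhere.

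For strict negativity, suppose $h(r_0)=0$ at some $r_0\in(0,\infty)$; then $r_0$ is a local maximum of the nonpositive function $h$, so $h'(r_0)=0$ and $h''(r_0)\le 0$, giving $\Delta_r h(r_0)\le 0$. But evaluating the equation at $r_0$ yields $\Delta_r h(r_0)=\tfrac12 f(r_0)(1-f(r_0)^2)>0$, a contradiction. For $h'(0)<0$, we already have $h'(0)\le 0$ because $h\le 0$ and $h(0)=0$. If $h'(0)=0$, smoothness of $h(r)e^{i\theta}$ at the origin forces $h$ to be an odd function of $r$, so $h(r)=c_3 r^3+O(r^5)$. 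Inserting $f(r)=f'(0)r+O(r^3)$ with $f'(0)>0$ into \eqref{bvph} and matching the $O(r)$ coefficients on both sides yields $8c_3=\tfrac12 f'(0)$, hence $c_3>0$; but then $h(r)>0$ for small $r>0$, contradicting $h\le 0$.

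The most delicate point is the first step, namely checking that $h^+$ is an admissible test function for the stability estimate; this rests on the $O(r^{-2})$ decay of $h$ at infinity, which is the one piece of input beyond Lemma~\ref{lem_stab} and elementary ODE arguments. The remaining steps are straightforward once the sign has been obtained.
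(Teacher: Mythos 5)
Your argument is correct, but it takes a genuinely different route from the paper. The paper never invokes the stability form here: it writes $h=fg$ (Mironescu's substitution, legitimate since $f>0$), derives the ODE $g''+(\tfrac1r+2\tfrac{f'}{f})g'=f^2g+\tfrac{1-f^2}{2}$, notes that $0<f<1$ forbids a positive interior maximum of $g$, and then a Taylor expansion at the origin gives $g'(0)=0$ and $g''(0)>0$, so that $g(0)<0$ (otherwise $g$, initially increasing, would need a positive maximum to return to $0$ at infinity); hence $g\le 0$, then $g<0$ by the same pointwise evaluation you use for strictness, and finally $h'(0)=f'(0)g(0)<0$. Your alternative — testing \eqref{bvph} against $h^+$ and combining the coercivity of Lemma~\ref{lem_stab} with the sign of the source $\tfrac12 f(1-f^2)\ge 0$ — is a clean use of machinery the paper has already built, and your strict-negativity and origin arguments are in the same spirit as the paper's. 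What your route costs is precisely the point you flag: you must justify $(0,h^+)\in\mathcal H$ and the vanishing of the boundary term $r h'h^+$ at infinity, so the asserted $O(r^{-2})$ decay (plus a matching gradient bound) needs an actual proof, e.g. a barrier $Cr^{-2}$ for $-\Delta_r+r^{-2}+(2f^2-1)$, whose zeroth-order coefficient is eventually positive, together with interior estimates; the paper's substitution needs no decay rate at all, only $h(\infty)=0$. Two small repairs to your write-up: (i) in the last step you appeal to smoothness of $h(r)e^{i\theta}$ on $\R^2$ to discard the $r^2$ term, but this is not among the lemma's hypotheses; it is also unnecessary, since if $h(0)=h'(0)=0$ then letting $r\to 0$ in \eqref{bvph} gives $\tfrac32 h''(0)=0$ on the left and $0$ on the right, so $h''(0)=0$, after which your $O(r)$ matching yields the positive $r^3$ coefficient (equivalently $c_3=f'(0)/16>0$) and the contradiction with $h\le 0$; (ii) you should record that $h'(h^+)'=((h^+)')^2$ a.e.\ and that the boundary contributions at $0$ and $\infty$ vanish, which is routine once the decay is established.
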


We now present the proofs  of Lemma~\ref{lem_stab} and Lemma~\ref{lem_h}.

\begin{proof}[Proof of Lemma~\ref{lem_stab}:]
We first remark that it suffices to establish the weaker estimate
\begin{equation}\label{weakstab}
Q_0[\varphi]\geq \delta \norm{\varphi}^2_{L^2(rdr)}\qquad\forall\varphi\in \mathcal H.
\end{equation}
Assume indeed that \eqref{weakstab} holds, but that \eqref{stab} does not. Then there is a sequence $\varphi_k$ such that $\norm{\varphi_k}_{\mathcal H}=1$ and $Q_0[\varphi_k]\to 0$. Using \eqref{weakstab}, it follows that $\norm{\varphi_k}_{L^2(rdr)}\to 0$. Since $f$ is uniformly bounded, this clearly implies that $Q_0[\varphi_k]=\norm{\varphi_k}_{\mathcal H}+o(1)$, which is absurd.

In view of the decoupled expression of $Q_0$ \eqref{Q0}, it is  enough to show that, for every $\varphi\in H^1_{loc}(0,\infty;\mathbb R)$ s.t. $\eta=\varphi(r)e^{i\theta}\in H^1(\mathbb R^2)$, it holds
\begin{equation}\label{tildeQ}
\widetilde Q [\varphi] :=\int_0^\infty \left\lbrace (\varphi')^2 +\frac{1}{r^2}\varphi^2 
+ (2f^2-1)\varphi^2\right\rbrace rdr \geq \delta \norm{\varphi}^2_{L^2(rdr)}.
\end{equation}
We appeal to Mironescu's stability result \cite{mironescu95}, which implies that, for any $\psi\in H^1(\mathbb R^2;\mathbb C)$, 
\begin{equation}\label{P}
P[ \psi ] = \int_{\mathbb R^2}\left\lbrace |\nabla\psi |^2 + (f^2-1)|\psi|^2 + 2 f^2(e^{i\theta}\cdot\psi)^2 \right\rbrace\geq 0.
\end{equation}
On the other hand, $\widetilde Q$ can be rewritten as
\begin{equation}\label{QP}
\widetilde Q [\varphi]=P[i\varphi(r)e^{i\theta}]+\int_0^\infty f^2\varphi^2\, rdr.
\end{equation}
Of course the second term in the right-hand side of \eqref{QP} is, by itself, not enough to make $\widetilde Q$ positive definite, since there exist sequences $\varphi_k$ with $\norm{\varphi_k}_{L^2}=1$ and
\begin{equation*}
\int_0^R f^2 \varphi_k^2 \, rdr \longrightarrow 0.
\end{equation*}
However, such sequences have their mass concentrated near zero, which makes the first term in the right-hand side of \eqref{QP} large. In other words, the competition between the two terms in the right-hand side of \eqref{QP} will ensure the positive definiteness of $\widetilde Q$.

Let us assume that \eqref{tildeQ} does not hold: there is a sequence $\varphi_k$ such that 
\begin{equation*}
\norm{\varphi_k}_{L^2(rdr)}=1,\quad \widetilde Q[\varphi_k]\longrightarrow 0.
\end{equation*}
Since $\widetilde Q[\varphi_k]$ is bounded, the sequence $\eta_k=\varphi_k(r)e^{i\theta}$ is bounded in $H^1(\mathbb R^2)$ and therefore weakly compact: up to extracting a subsequence, $\eta_k$ converges a.e., and strongly in $L^2_{loc}$. Hence there is a function $\varphi\in L^2(0,\infty)$ such that $\varphi_k \to \varphi$ a.e., and strongly in $L^2(0,1)$.
Since, by \eqref{QP} and \eqref{P}, 
\begin{equation*}
\int f^2\varphi_k^2 rdr \leq \widetilde Q[\varphi_k],
\end{equation*}
we deduce, using Fatou's lemma, that $\int f^2\varphi^2 rdr=0$, and therefore $\varphi\equiv 0$. In particular, it holds
\begin{equation*}
\int_0^1 \varphi_k^2 \, rdr \longrightarrow 0,
\end{equation*}
from which we infer that
\begin{align*}
\widetilde Q[\varphi_k]& \geq \int_0^\infty f^2\varphi_k^2\, rdr = \int_1^\infty f^2\varphi^2\, rdr +o(1)\\
& \geq f(1)^2 \int_1^\infty \varphi_k^2\, rdr +o(1)=f(1)^2 +o(1),
\end{align*}
contradicting the fact that $\widetilde Q[\varphi_k]\to 0$.
\end{proof}

\begin{proof}[Proof of Lemma~\ref{lem_h}:]
It is well known \cite{herve94} that $f>0$ in $(0,\infty)$. Hence we may write 
\begin{equation*}
h=fg
\end{equation*}
for some function $g$ which is smooth in $(0,\infty)$ and continuous up to $0$. In fact $g$ is smooth up to $0$, since $f(r)=r\tilde f(r)$ and $h(r)=r\tilde h(r)$ for some functions $\tilde f$ and $\tilde h$ which are smooth on $[0,\infty)$ and $\tilde f$ does not vanish on $[0,\infty)$.  

The idea of decomposing $h$ as $h=fg$ is reminiscent of Mironescu's method \cite{mironescu96} to show the radial symmetry of entire vortices of degree one in the classical one-component Ginzburg-Landau framework.

Let us compute the differential equation satisfied by g. It holds
\begin{gather*}
g'=\left( \frac hf \right)' = \frac{h'}{f}-\frac{hf'}{f^2},\\
g'' = \frac{h''}{f}-2\frac{f'h'}{f^2}-\frac{hf''}{f^2} + 2\frac{h(f')^2}{f^3}.
\end{gather*}
Therefore we find
\begin{align*}
f^2 g'' & = h''f-hf'' -2 f'h' + 2 g(f')^2 \\
& = \left[ (2f^2-1)h +\frac 12 f(1-f^2) + \frac{1}{r^2}h - \frac{1}{r}h' \right]f \\
& \quad -\left[f(f^2-1) +\frac{1}{r^2}f -\frac{1}{r}f'  \right] h -2 f'h' + 2 g(f')^2 \\
& = f^3h + \frac 12 f^2(1-f^2) -\frac 1r h'f + \frac 1r f'h -2 f'h' + 2 g (f')^2 \\
& = f^4 g + \frac 12 f^2(1-f^2) -\frac 1 r  f^2 g' -2 f' (f'g +g'f) + 2 (f')^2 g \\
& = -\left( \frac 1 r f^2 + 2 f'f \right) g'  + f^4 g + \frac 12 f^2 (1-f^2).
\end{align*}

Hence $g$ satisfies the differential equation
\begin{equation*}
g'' + \left( \frac 1r + 2 \frac{f'}{f} \right) g'  = f^2 g + \frac{1-f^2}{2},
\end{equation*}
and the boundary condition
\begin{equation*}
g(R)=0.
\end{equation*}

Recall that it holds $0<f<1$ in $(0,\infty)$. Therefore the equation implies that  $g$ can not admit a positive maximum in $(0,\infty)$, and it holds
\begin{equation*}
g\leq \max (0,g(0)).
\end{equation*}
Next we prove that $g(0) < 0$. To this end we show that $g'(0)=0$ and $g''(0)>0$. Therefore $g$ is initially increasing. In particular, if we assume that $g(0)\geq 0$, then to match the boundary condition $g(\infty)=0$, $g$ would have to attain a positive maximum inside $(0,\infty)$ which is impossible.

To show that $g'(0)=0$ and $g''(0)>0$, we perform a Taylor expansion near zero: write
\begin{equation*}
g=g_0 + g_1 r + \frac{g_2}{2}r^2 + O(r^3),\quad f=f_1 r + \frac{f_2}{2}r^2 + O(r^3),
\end{equation*}
so that
\begin{gather*}
g' = g_1 + g_2 r + O(r^2),\quad g'' = g_2 + O(r),\quad \frac{f'}{f}=\frac 1 r +\frac{f_2}{2f_1}+O(r),\\
\left(\frac 1r + 2\frac{f'}{f}\right)g' = \left(\frac 3r + \frac{f_2}{f_1}+O(r)\right)(g_1+g_2 r + O(r^2))=\frac{3g_1}{r} + 3g_2 +  g_1\frac{f_2}{f_1} + O(r) \\
g'' + \left( \frac 1r + 2 \frac{f'}{f} \right) g'  - f^2 g - \frac{1-f^2}{2}
= \frac{3g_1}{r} + 4 g_2 + g_1\frac{f_2}{f_1} -\frac{1}{2} + O(r).
\end{gather*}
Hence it holds $g_1=0$ and $g_2=1/8>0$.

As explained above, it follows that $g(0)<0$. In particular, $\max (0,g(0))=0$ and $g\leq 0$ in $[0,\infty)$. We claim that in fact this inequality is strict: it holds
\begin{equation*}
g<0 \quad\text{in } [0,\infty).
\end{equation*}
Assume indeed that $g(r_0)=0$ for some $r_0\in (0,\infty)$. Then $r_0$ is a point of maximum of $g$, so that $g''(r_0)\leq 0$. But on the other hand it holds $2g''(r_0)= 1 - f(r_0)^2 >0$, so that we obtain a contradiction. We conclude that $g<0$ in $[0,\infty)$ and therefore $h<0$ in $(0,\infty)$. Moreover, $h'(0)=f'(0)g(0)<0$.
 \end{proof}

Also of use will be the fact that the space $\mathcal H$ is embedded into the space of continuous maps vanishing at zero and infinity.
\begin{lem}\label{lem_H}
It holds 
\begin{equation*}
\mathcal H\subset \left\lbrace \varphi\in \left[C(0,\infty)\right]^2\colon \varphi(0)=0,\, \lim_{r\to\infty}\varphi(r)=0\right\rbrace,
\end{equation*}
and $\|\varphi\|_{L^\infty}\le \|\varphi\|_{\mathcal{H}}$ for all $\varphi\in\mathcal{H}$.
\end{lem}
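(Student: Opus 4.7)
The plan is to reduce to a scalar radial profile: the space $\mathcal H$ is defined componentwise, so it suffices to prove the statement for a single real-valued $u\in H^1_{loc}(0,\infty)$ such that $\eta(r,\theta)=u(r)e^{\pm i\theta}\in H^1(\RR^2)$. Expressing $\norm{\eta}_{H^1(\RR^2)}^2$ in polar coordinates (with $|\nabla\eta|^2 = (u')^2 + u^2/r^2$) produces three finite weighted integrals
\begin{equation*}
I_1 := \int_0^\infty (u')^2\, r\, dr, \qquad I_2 := \int_0^\infty \frac{u^2}{r}\, dr, \qquad I_3 := \int_0^\infty u^2\, r\, dr,
\end{equation*}
each controlled by $\norm{\varphi}_{\mathcal H}^2$.

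First, the 1D Sobolev embedding on compact subintervals of $(0,\infty)$ gives $u\in C(0,\infty)$. To analyze the endpoint behaviour I would use the identity $u(r)^2 - u(s)^2 = 2\int_s^r u u'\, d\rho$, split the integrand as $(u/\sqrt{\rho})(u'\sqrt{\rho})$, and apply Cauchy--Schwarz:
\begin{equation*}
\abs{u(r)^2 - u(s)^2}\le 2\left(\int_s^r \frac{u^2}{\rho}\, d\rho\right)^{1/2}\left(\int_s^r (u')^2 \rho\, d\rho\right)^{1/2}.
\end{equation*}
Both factors are tails of convergent integrals, so the Cauchy criterion yields existence of $u(0^+)$ and $u(\infty)$. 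A nonzero limit $c$ at either endpoint would force $u^2/\rho$ to behave like $c^2/\rho$ there, contradicting $I_2<\infty$; hence $u$ extends continuously to $[0,\infty]$ with $u(0)=u(\infty)=0$.

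For the $L^\infty$ estimate I would exploit $u(\infty)=0$ to write $u(r)^2 = -2\int_r^\infty u u'\, d\rho$, apply the same Cauchy--Schwarz splitting, and then AM--GM:
\begin{equation*}
u(r)^2 \le 2\sqrt{I_1 I_2}\le I_1 + I_2 \le \norm{\varphi}_{\mathcal H}^2.
\end{equation*}
Taking the supremum in $r$ and then the maximum over the two components $\varphi_\pm$ yields $\norm{\varphi}_{L^\infty}\le \norm{\varphi}_{\mathcal H}$. The whole argument is elementary once the three weighted integrals are identified; the only point needing a small amount of care is to split with the weights $\rho\, d\rho$ and $d\rho/\rho$ when applying Cauchy--Schwarz, so that the resulting bound is genuinely $r$-independent and controlled by the $\mathcal H$ norm.
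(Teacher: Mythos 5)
Your proof is correct and follows essentially the same route as the paper: both control $\abs{\varphi_\pm(r_1)^2-\varphi_\pm(r_2)^2}=2\abs{\int\varphi_\pm\varphi_\pm'}$ by splitting with the weights $\rho$ and $1/\rho$, use the Cauchy criterion at $0$ and $\infty$, rule out nonzero limits via the finiteness of $\int \varphi_\pm^2\, r^{-1}dr$, and get the supremum bound from the same inequality anchored at an endpoint. The only cosmetic difference is that the paper applies the pointwise inequality $2ab\le a^2+b^2$ inside the integral rather than Cauchy--Schwarz followed by AM--GM, which yields the identical final estimate.
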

\begin{proof}
Let $\varphi\in\mathcal H$. Then $\varphi$ is absolutely continuous in $(0,\infty)$. So are $\varphi_\pm^2$, and $(\varphi_\pm^2)'=2\varphi_\pm\varphi'_\pm$. For any $r_1\geq r_2$ it holds
\begin{align*}
\abs{\varphi_\pm(r_1)^2-\varphi_\pm(r_2)^2}&\leq 2\int_{r_1}^{r_2}\abs{\varphi_\pm}\abs{\varphi'_\pm}dr\\
%& \leq 2\left(\int_{r_1}^{r_2}\frac{\varphi_\pm^2}{r^2}rdr\right)^{1/2}
%\left(\int_{r_1}^{r_2}(\varphi'_\pm)^2rdr\right)^{1/2},
& \leq \int_{r_1}^{r_2}\left[\frac{\varphi_\pm^2}{r^2}+
(\varphi'_\pm)^2\right] rdr,
\end{align*}
so that $\varphi_\pm^2$ is Cauchy at $0$ and $\infty$. Obviously the corresponding limits must be zero.  The estimate on the supremum norm follows by choosing $r_1=0$ in the inequality above.
\end{proof}

Finally, we require an asymptotic expansion of solutions which is uniform in the parameter $t$.  The following result is proven in section~\ref{s_asympt}:
\begin{thm}\label{asymptotics}
Let $[f_{t,-}, f_{t,+}]$ be solutions of \eqref{GLt}, and assume that for every $\delta>0$ there exists $R_0>0$ and $0\le T_1\le T_2\le 1$ such that for every $R>R_0$ and $t\in [T_1,T_2]$, 
\begin{equation}\label{decay}
|f_+^t(r)|\le t\delta ,\qquad |f_-^t(r)-1| \le \delta,
\end{equation}
for all $r\ge R$.
Then we have 
\begin{equation}  \label{asym}
f_{t,-}= 1 - \frac{1}{2 r^{2}} -\frac{5t^2+9}{8 r^{4}} + O(r^{-6}), \qquad
f_{t,+} = t\left[-{1\over 2 r^2} -{13\over 4r^4} + O(r^{-6})\right],
\end{equation}
as $r\to\infty$.
More precisely, there exist positive constants $C_\pm, C'_\pm, R>0$ such that
\begin{gather}\label{fmasy}
\left|f_{t,-}(r)-\left(1 - \frac{1}{2 r^{2}} -\frac{5t^2+9}{8 r^{4}}\right)\right|\le\frac{C_-}{r^6} ,  \\
\label{fpasy}
\left|f_{t,+}(r)+t\left[{1\over 2 r^2} +{13\over 4r^4}\right]\right|\le t\frac{C_+}{r^6}\\
\label{f'masy}
\left|f'_{t,-}(r)+\frac{1}{r^3}\right|\le\frac{C'_{-}}{r^5} ,\\
\label{f'pasy}
\left|f'_{t,+}(r)+\frac{t}{r^3}\right|\le t\frac{C'_{+}}{r^5} ,
\end{gather}
hold for all $r\ge R$ and all $t\in [T_1,T_2]$.
\end{thm}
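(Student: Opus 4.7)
The plan is to carry out an iterative asymptotic expansion of the ODE system \eqref{GLt} as $r\to\infty$, in the spirit of \cite{CEQ,alamagao13}. Introduce $u=f_{t,-}-1$ and $v=f_{t,+}$. By hypothesis \eqref{decay}, $u$ and $v/t$ are uniformly small for $r\ge R_0$ and $t\in[T_1,T_2]$. Expanding the right-hand sides of \eqref{GLt} around $(f_-,f_+)=(1,0)$ yields
\begin{align*}
\Delta_r u +\tfrac{t}{2}\Delta_r v -2u &= \tfrac{1}{r^2}+\tfrac{u+tv/2}{r^2}+3u^2+u^3+2(1+u)v^2,\\
\Delta_r v +\tfrac{t}{2}\Delta_r u -v &= \tfrac{t}{2r^2}+\tfrac{v+tu/2}{r^2}+v^3+4uv+2u^2v.
\end{align*}
The matrix $M=\bigl(\begin{smallmatrix}1 & t/2\\ t/2 & 1\end{smallmatrix}\bigr)$ coupling the Laplacians satisfies $\det M=1-t^2/4\ge 3/4$ uniformly in $t\in[0,1]$, so multiplying by $M^{-1}$ diagonalizes the principal part; the linear part of the resulting system has eigenvalues bounded away from zero uniformly in $t$.

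The bulk of the proof is a bootstrap. First I would upgrade \eqref{decay} to the algebraic decay $|u|\le C/r^2$ and $|v|\le Ct/r^2$ by a supersolution argument for the diagonalized system: the positive linear part, together with the algebraic forcing $1/r^2$ (resp.\ $t/(2r^2)$), forces decay at rate $1/r^2$, and the linear-in-$t$ scaling of $v$ is preserved because every source or coupling driving $v$ in the second equation (namely $t/(2r^2)$, $(t/2)\Delta_r u$, $tu/(2r^2)$, and $4uv$) carries a factor of $t$ or is quadratically small. The leading-order algebraic balance then gives $u=-1/(2r^2)+O(r^{-4})$ and $v=-t/(2r^2)+O(r^{-4})$. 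Writing $u=-1/(2r^2)+u_2$ and $v=-t/(2r^2)+v_2$, substituting back, and matching the $1/r^4$ contributions (coming from $\Delta_r$ of the leading parts, the factors $u/r^2$ and $tv/(2r^2)$, and the nonlinear pieces $3u^2$, $2v^2$, $4uv$) produces $u_2=-(5t^2+9)/(8r^4)+O(r^{-6})$ and $v_2=-13t/(4r^4)+O(r^{-6})$, exactly as in the formal calculation. One further bootstrap iteration controls the remainder at order $r^{-6}$ and yields \eqref{fmasy}--\eqref{fpasy}. The derivative bounds \eqref{f'masy}--\eqref{f'pasy} then follow by integrating $(ru')'=r\Delta_r u$ (and the analogous identity for $v$): since $ru'(r)\to0$ by the decay of $u$ and the equation, $u'(r)=-\tfrac{1}{r}\int_r^\infty s\,\Delta_r u(s)\,ds$, and inserting the expansion of $\Delta_r u$ read off from the equation gives $u'(r)=1/r^3+O(r^{-5})$, uniformly in $t$.

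The main obstacle is to make the bootstrap rigorous for the coupled system while simultaneously preserving uniformity in $t$ and the scaling $v\sim t$: a scalar maximum principle fails because the Laplacians are cross-coupled, and the factor $t$ must be tracked through several iterations of nonlinear estimates. The resolution is to work throughout in the diagonalized form, using $(u,v/t)$ as the natural unknowns and exploiting that all constants depend only on the uniform lower bound $\det M\ge 3/4$, on the uniformly separated eigenvalues of the linearized operator, and on uniform bounds for the nonlinearities, all of which hold for $t\in[0,1]$. The comparison argument then closes uniformly for $t\in[T_1,T_2]$ and the iteration produces the constants $C_\pm,C'_\pm$ of the statement, independent of $t$.
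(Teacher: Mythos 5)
Your formal matching is the same one that underlies the paper's proof (the coefficients $-\tfrac12$, $-\tfrac{5t^2+9}{8}$, $-\tfrac{t}{2}$, $-\tfrac{13t}{4}$ come out exactly as you indicate), and the overall strategy --- algebraic barriers validated by comparison, then derivative bounds by integrating the equation --- is also the paper's. The genuine gap is in how you propose to justify the comparison steps. You correctly identify the failure of the scalar maximum principle as the main obstacle, but your proposed resolution --- passing to the diagonalized system and using $(u,v/t)$ as unknowns --- does not remove it: multiplying by $M^{-1}$ decouples only the second-order part, and the zeroth-order coupling survives (in \eqref{systftdiag} the $f_-$ equation still contains $-\tfrac t2 f_+(2f_-^2+f_+^2-1)$ and vice versa). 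Hence every one of your bootstrap/supersolution steps, starting with the claimed upgrade of \eqref{decay} to $|u|\le C/r^2$, $|v|\le Ct/r^2$, still requires a genuinely vectorial comparison principle on $[R,\infty)$, uniform in $t$, which you never state or prove; ``the comparison argument then closes uniformly'' is at this point an assertion. This is the one nontrivial ingredient of the whole theorem. The paper isolates it as Lemma~\ref{comparison} and proves it by an energy argument (testing the coupled differential inequalities with the positive parts $u^+,v^+$ against the weight $e^{-(r-R)/R}$), and what makes it work is not diagonalization but the cooperative sign structure of the linearized coupling matrix $\bigl(\begin{smallmatrix} 2 & -t/2 \\ -t & 1 \end{smallmatrix}\bigr)$ (off-diagonal entries $\le 0$) together with the uniform positive definiteness of its symmetrized quadratic form, condition \eqref{comp0}, valid for all $t\in[0,1]$.

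If you supply such a lemma, your iterative organization is a workable variant of the paper's argument, which instead builds in one shot a sub/supersolution pair containing the expansion through $r^{-4}$ plus a small correction $c_\pm R^6/r^6$, verifies the supersolution inequalities by a lengthy polynomial expansion with $c_\pm\sim\delta$ small, matches at $r=R$ using \eqref{decay} (this is exactly where the factor $t$ in the hypothesis $|f_+^t|\le t\delta$ is needed so that the $t$-scaled barrier dominates $f_+$ at the endpoint --- a point you should make explicit), and applies Lemma~\ref{comparison} twice. Two smaller repairs: justify $ru'(r)\to 0$ before writing $u'(r)=-\tfrac1r\int_r^\infty s\,\Delta_r u\,ds$ (it follows since $(ru')'=r\Delta_r u$ is integrable once the $r^{-4}$ information is in hand, and a nonzero limit would force logarithmic growth of $u$; the paper avoids the issue with mean values $r_k\in(k,2k)$ where $g'_\pm(r_k)=O(k^{-7})$), and note that the sign you obtain, $f'_{t,-}=\tfrac1{r^3}+O(r^{-5})$, is the one consistent with \eqref{asym}.
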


\subsection{Proof of Theorem~\ref{thm3}}\label{ss_proof}

\textbf{Step 1:} Construction of the family $t\mapsto f^t$.

We denote by $\mathcal N_t(f)$ the quasilinear differential operator such that 
\begin{equation*}
\langle DI_t(f;R),\varphi\rangle_{\mathcal (H^0_R)^*,\mathcal H^0_R} = \langle\mathcal N_t(f),\varphi\rangle_{L^2(rdr)}.
\end{equation*}
for $\varphi\in C_c^\infty(0,R)$. In other words, the system \eqref{GLt} is exactly $\mathcal N_t(f)=0$.

Using the fact that $\mathcal N_0 (f^0)=0$, one may check that
\begin{equation*}
\mathcal N_t(f^0+g)\in \mathcal H^*\qquad\forall g\in\mathcal H.
\end{equation*}
Moreover, the map
\begin{equation*}
\mathcal F \colon (-1,1)\times \mathcal H\to\mathcal H^*,\quad (t,g) \mapsto \mathcal N_t(f^0+g),
\end{equation*}
is smooth.
Since
\begin{equation*}
\langle D_g\mathcal F(0,0)\varphi,\varphi\rangle_{\mathcal H^*,\mathcal H}=Q_0[\varphi],
\end{equation*}
Lemma~\ref{lem_stab} and Lax-Milgram theorem imply that $D_g\mathcal F(0,f^0)$ is invertible. Applying the implicit function theorem, we find that there exists $t_0>0$, $\delta_0>0$ and a smooth map
\begin{equation*}
(-t_0,t_0)\ni t\mapsto g^t \in\mathcal H,\quad g^0=0,
\end{equation*}
such that, for  $\abs{t}<t_0$ and $\norm{g}_{\mathcal H}<\delta_0$,
\begin{equation}\label{implicit}
\mathcal F (t,g)=0 \quad\Longleftrightarrow\quad g=g^t.
\end{equation}
In particular, $f^t=f^0+g^t$ solves \eqref{GLt}. Elliptic regularity ensures that for every $t$, $f^t$ is a smooth function. 

%We may assume in addition that
%\begin{equation*}
%D_g\mathcal F (t,g)\text{ is invertible for }\abs{t}<t_0,\; \norm{g}_{\mathcal H}<\delta.
%\end{equation*}

\textbf{Step 2:} The map $t\mapsto f^t \in C^k([0,R])$ is smooth, for any integer $k$ and $R>0$.

In fact we consider spaces of differentiable functions which are more appropriate to our problem: let
\begin{equation*}
\widetilde C^k(0,R)=\left\lbrace f_\pm\in C^k(0,R)\colon \eta_\pm=f_\pm(r)e^{\pm i\theta}\in C^k(\overline B_R)\right\rbrace.
\end{equation*}

Let $t_1\in (-t_0,t_0)$. Since (by Lemma~\ref{lem_H}) $\mathcal H$ is embedded in a space of continuous functions, the map $t\mapsto g^t(R)$ is smooth, and we may fix a smooth map $t\mapsto \psi^t\in \widetilde C^{k+2}(0,R)$ such that
\begin{equation*}
\psi^0\equiv 0,\quad (\psi^t + g^{t_1})(R)=g^{t_1+t}(R).
\end{equation*}
Next we consider the smooth map
\begin{equation*}
\widetilde{\mathcal F}\colon (-\varepsilon,\varepsilon)\times \widetilde C^{k+2}(0,R)\to \widetilde C^k(0,R),\quad (t,g)\mapsto \mathcal N_{t_1+t}(f^{t_1}+\psi^t +g).
\end{equation*}
The small constants $t_0$ and $\delta_0$ in Step~1 may be chosen so that
\begin{equation*}
\langle D_g\mathcal F(t,g)\varphi,\varphi\rangle_{\mathcal H^*,\mathcal H}\geq c\norm{\varphi}^2_{\mathcal H},\qquad \abs{t}<t_0,\;\norm{g}<\delta_0,
\end{equation*}
for some $c>0$. It is then easy to check, using elliptic regularity, that $D_g\widetilde{\mathcal F}(0,0)$ is invertible. Therefore the implicit function theorem provides us with a smooth family $t\mapsto\tilde g^t\in \widetilde C^{k+2}(0,R)$ defined for small $t$ and solving
\begin{equation*}
\mathcal N_{t_1+t}(f^{t_1}+\psi^t +\tilde g^t)=0.
\end{equation*}
For small enough $t$, the function
\begin{equation*}
\hat g^t =\begin{cases}
 g^{t_1}+\psi^{t}+\tilde g^{t} & \text{ in }(0,R),\\
 g^{t_1+t} & \text{ in }(R,\infty),
 \end{cases}
\end{equation*}
satisfies $\norm{\hat g^t}_{\mathcal H}<\delta_0$. Moreover, it holds $\mathcal F(t_1+t,\hat g^t)=0$, so that by \eqref{implicit} we deduce that $\hat g^t =g^{t_1+t}$. In particular, the  map $t\mapsto g^t\in \widetilde C^{k+2}(0,R)$ is smooth.

\textbf{Step 3:} It holds $f_+^t< 0$ and $0< f^t_-< 1$ in $(0,\infty)$ for small enough $t$.

Let $\phi^t ={\partial\over\partial t} f^t$.  By Step 2, the map $t\mapsto \phi^t$ is smooth in $\mathcal{H}\cap C^k_{loc}$ for each $k$, and hence $\phi^t$ solves the system obtained by differentiating the equations \eqref{systftdiag} with respect to $t$.  As $\phi^t$ is continuous at $t=0$, a computation reveals that $\phi^0=(\phi^0_-,\phi^0_+)\in \mathcal{H}\cap C^k_{loc}$, with $\phi^0_+=h$, the solution of \eqref{bvph} and $\phi^0_-$ solving the linearized radial Ginzburg-Landau equation, 
$\Delta_r \phi_-^0 -\frac{1}{r^2}\phi_-^0 = \phi_-^0(3f^2-1)$, and thus, $\phi^0_-=0$.  As the map $t\mapsto f^t$ is smooth in $\mathcal{H}\cap C^k_{loc}$, it follows that $f^t = f^0 + t \phi^0 + O(t^2)$, with error term uniform in supremum norm on $[0,\infty)$, by Lemma~\ref{lem_H}.  Since $\phi_\pm^0(r)\to 0$ as $r\to\infty$, for any $\delta>0$ we may find $R_0>0$ such that $|\phi_\pm^0(r)|<{\delta\over 2}$ for all $r\ge R_0$.  By the Taylor expansion of $f^t$ we may then conclude that for any $R\ge R_0$, there exists $T>0$ for which
 $$|f_+^t(r)|\le t\delta, \ \text{and} \ |f_-^t(r)-1|\le\delta, $$
  for all $r\ge R$ and $t\in [0,T]$.  

The solutions $f^t$ thus satisfy the hypotheses of  Theorem~\ref{asymptotics}, therefore we may choose $R>0$ such that for all $t\in (0,T]$,
\begin{equation*}
f_+^t<0\quad\text{and}\quad 0<f_-^t<1\qquad\text{in }[R,\infty).
\end{equation*}
Thus it only remains to show that $f_+^t< 0$ and $0< f^t_-< 1$ in $(0,R)$ for small enough $t$.

It is well-known \cite{herve94} that $(f_-^0)'=f'\geq c>0$ in $(0,R)$, so that Step 2 ensures that $(f_-^t)'>0$ in $(0,R)$ for small enough $t$, and we deduce that $0<f_-^t<1$ in $(0,R)$.

Next we show that $f_+^t<0$.  We recall that $h =\frac{\partial}{\partial t}\left[f_+^t\right]_{t=0}$ solves \eqref{bvph}.
In view of Lemma~\ref{lem_H}, $h$ is bounded and satisfies $h(0)=h(\infty)=0$. Elliptic regularity ensures that $h$ is smooth in $[0,\infty)$, and we may apply Lemma~\ref{lem_h}. Thus it holds $h<0$ in $(0,\infty)$, and $h'(0)<0$. There exists $r_0>0$ and $\eta>0$ such that 
\begin{equation*}
h'(r)\leq -\eta\;\text{ in }[0,r_0],\qquad h(r)\leq -\eta\;\text{ in }[r_0,R].
\end{equation*}
Using Step~2, we infer that for all small enough $t$, 
\begin{equation*}\frac{\partial}{\partial t}[(f_+^t)']\leq -\eta/2<0\;\text{ in }[0,r_0],\qquad \frac{\partial}{\partial t}[ f_+^t]\leq -\eta/2\;\text{ in }[r_0,R],
\end{equation*}
which obviously implies, since $f_+^0\equiv 0$, that $f_+^t<0$ in $(0,R]$.
\qed

\section{Asymptotics}\label{s_asympt}

We derive the asymptotic behavior of solutions $f_\pm(r)$ as $r\to\infty$ by means of the sub- and super-solutions method. 
We recall the notation for the Laplacian of radial functions in $\RR^2$, 
$\Delta_r u(r):= \frac1r (r\, u'(r))'$.
This we accomplish thanks to the following comparison lemma, which is an adaptation of Lemma~3.1 in \cite{alamagao13}:

\begin{lem}\label{comparison}
Let $\mathbb{A, B, C, D}$ be bounded functions on $[R,\infty)$, with $\mathbb{A},\mathbb{D}>0$, $\mathbb{B},\mathbb{C}\le 0$, and such that the quadratic form defined by $\mathbb{A, B, C, D}$ satisfies the bound
\begin{equation}\label{comp0}  \mathbb{A}(r)x^2 + (\mathbb{B}(r)+\mathbb{C}(r))xy + \mathbb{D}(r)y^2 
   \ge \delta(x^2+y^2), 
\end{equation}
for all $r\in [R,\infty)$, $(x,y)\in\RR^2$, and constant $\delta>{1\over 2R^2}.$
 Then, if $u, v$ satisfy:
$$
\left\{
\begin{gathered}
 -\Delta_r u+ {1\over r^2}u+\mathbb{A}u+\mathbb{B} v\le 0,\\
 -\Delta_r v+{1\over r^2}v+\mathbb{C}u+\mathbb{D} v\le 0,
\end{gathered}
\right.
$$
for $r\in (R,\infty)$, with 
$$
u(R)\le0, \quad v(R)\le0, \qquad u(r), v(r)\to 1 \quad \text{as $r\to\infty$.}
$$  
  we have that $u\le0$ and $v\le0$ in $[R, \infty)$.
\end{lem}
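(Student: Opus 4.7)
My plan is to prove a weak maximum principle for the coupled system by introducing the scalar auxiliary function
\begin{equation*}
W(r) := u^+(r)^2 + v^+(r)^2,\qquad u^+=\max(u,0),\ v^+=\max(v,0).
\end{equation*}
This $W$ is Lipschitz and non-negative on $[R,\infty)$. The boundary hypothesis $u(R),v(R)\le 0$ gives $W(R)=0$, and (reading the stated limit $u,v\to 1$ as a typo for $u,v\to 0$, consistent with the conclusion $u,v\le 0$) we have $W(r)\to 0$ as $r\to\infty$. Thus it suffices to prove $W\equiv 0$.

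\textbf{Key subsolution inequality.} I would establish that $W$ satisfies the distributional inequality $\Delta_r W\ge 2\delta W$ on $(R,\infty)$ by a case analysis on the signs of $u$ and $v$. On the open set $\{u>0,v>0\}$, $W$ is classically smooth and
\begin{equation*}
\tfrac12\Delta_r W = u\,\Delta_r u+v\,\Delta_r v+(u')^2+(v')^2.
\end{equation*}
Substituting the two differential inequalities, using $u,v>0$, and invoking the coercivity \eqref{comp0} at $(x,y)=(u,v)$ yields
\begin{equation*}
\tfrac12\Delta_r W \ge \tfrac{W}{r^2}+\mathbb{A}u^2+(\mathbb{B}+\mathbb{C})uv+\mathbb{D}v^2\ge \Bigl(\tfrac{1}{r^2}+\delta\Bigr)W.
\end{equation*}
On $\{u>0,v\le 0\}$ one has $W=u^2$, and the sign condition $\mathbb{B}\le 0$ combined with $v\le 0$ makes $\mathbb{B}v\ge 0$, so the first differential inequality gives $\tfrac12\Delta_r W\ge \mathbb{A}u^2\ge \delta W$, where $\mathbb{A}\ge \delta$ follows from \eqref{comp0} with $y=0$. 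The region $\{u\le 0,v>0\}$ is symmetric, and on $\{u\le 0,v\le 0\}$ the inequality is trivial since $W\equiv 0$.

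\textbf{Conclusion.} With the subsolution inequality in hand, suppose $W\not\equiv 0$. Since $W$ is continuous, non-negative, and vanishes at both endpoints, it attains a strictly positive maximum at some interior point $r_0\in(R,\infty)$. There $W'(r_0)=0$ and $W''(r_0)\le 0$, hence $\Delta_r W(r_0)\le 0$; but this contradicts $\Delta_r W(r_0)\ge 2\delta W(r_0)>0$. Therefore $W\equiv 0$, i.e.\ $u\le 0$ and $v\le 0$ on $[R,\infty)$.

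\textbf{Main obstacle.} The principal technical point is the lack of $C^2$ regularity of $W$ on $\{u=0\}\cup\{v=0\}$, so that the pointwise maximum argument must be reinterpreted. The cleanest remedy is to apply Kato's inequality $\Delta(u^+)\ge \chi_{\{u>0\}}\Delta u$ distributionally, which validates $\Delta_r W\ge 2\delta W$ globally; an alternative is to replace the positive part by smooth convex approximants $F_\varepsilon(t)\nearrow \tfrac12(t^+)^2$ with $F_\varepsilon',F_\varepsilon''\ge 0$, carry out the calculation and the contradiction for the regularised $W_\varepsilon$, and pass to the limit. The quantitative hypothesis $\delta>1/(2R^2)$ is not essential for the core argument (any $\delta>0$ would suffice), but provides a quantitative margin that is convenient when the lemma is applied to differences between $f_\pm$ and explicit asymptotic profiles, where lower-order error terms need to be absorbed.
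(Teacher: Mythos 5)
Your proof is correct in substance, but it follows a genuinely different route from the paper's. The paper never forms a scalar auxiliary function: it tests the two differential inequalities directly against $u^+\eta_R$ and $v^+\eta_R$, where $\eta_R(r)=e^{-(r-R)/R}$ is an exponential weight, integrates over $[R,\infty)$, uses the same sign trick you use (since $\mathbb{B},\mathbb{C}\le 0$, the cross terms $\mathbb{B}u^+v$, $\mathbb{C}v^+u$ may be replaced by $\mathbb{B}u^+v^+$, $\mathbb{C}u^+v^+$) together with \eqref{comp0}, and ends with
\begin{equation*}
0\ge \int_R^\infty \Bigl\{([u^+]')^2+([v^+]')^2+\bigl(\delta-\tfrac{1}{2R^2}\bigr)\bigl([u^+]^2+[v^+]^2\bigr)\Bigr\}\eta_R\, r\, dr,
\end{equation*}
so the threshold $\delta>\frac{1}{2R^2}$ is exactly what absorbs the term coming from differentiating the weight; your remark that this condition is an artifact of the method and that any $\delta>0$ suffices for your argument is accurate. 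What the paper's weighted energy argument buys is that it works entirely at the level of the weak formulation, with no maximum principle and no discussion of second derivatives of positive parts; what your pointwise argument buys is the removal of the weight and of the lower bound on $\delta$, at the price of the regularity issue you flag. That issue is genuinely routine: since $u,v$ are smooth in the application, $\nabla (u^\pm)^{+2}$... more precisely $\nabla (u^+)^2=2u^+\nabla u$ is Lipschitz, so $W\in C^{1,1}$ and $\tfrac12\Delta_r W= \chi_{\{u>0\}}(u')^2+u^+\Delta_r u+\chi_{\{v>0\}}(v')^2+v^+\Delta_r v$ a.e.\ and distributionally (no singular part, so Kato is not even needed), and the contradiction at an interior positive maximum $r_0$ can be obtained without regularisation by integrating $(rW')'\ge 2\delta rW>0$ a.e.\ near $r_0$: since $W'(r_0)=0$, one gets $W'>0$ just to the right of $r_0$, contradicting maximality. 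Finally, your reading of ``$u,v\to 1$'' as a typo for ``$u,v\to 0$'' is the right one (as literally stated the condition at infinity is incompatible with the conclusion), and it matches how the lemma is applied to $h_\pm=f_\pm-w_\pm$ in the proof of Theorem~\ref{asymptotics}; note that your proof uses this decay essentially (to get $W\to0$ at infinity), whereas the paper's proof only needs the boundary terms at infinity in the weighted integration by parts to vanish.
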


\begin{proof}
Let $u^{\pm}=\text{max} (\pm u, 0)$ and $v^{\pm}=\text{max} (\pm v, 0)$, the positive and negative parts of each component.
We set $\eta_R(r)=e^{-(r-R)/R}$, $r\in [R,\infty)$, multiply the first equation by $u^{+}\eta_R$ and the second equation by $v^{+}\eta_R$, integrate over $[R, \infty)$, and add the two resulting inequalities, to obtain:
\begin{multline}\label{comp1}  \int_R^\infty \biggl\{ 
-u^+\Delta_r u -v^+\Delta_r v  + {(u^+)^2 + (v^+)^2\over r^2} + \\
    \mathbb{A}(u^+)^2 + \mathbb{B} u^+v + \mathbb{C} v^+u + \mathbb{D} (v^+)^2
\biggr\}\eta_R\, r\, dr\le 0.
\end{multline}
Applying \eqref{comp0}, all but the first two terms  in \eqref{comp1} may be bounded as follows:
\begin{align*} &
\int_R^\infty \left[ {(u^+)^2 + (v^+)^2\over r^2} +
    \mathbb{A}(u^+)^2 + \mathbb{B} u^+v + \mathbb{C} v^+u + \mathbb{D} (v^+)^2
\right]\eta_R\, r\, dr  \\
&\qquad \ge
\int_R^\infty \left[ 
    \mathbb{A}(u^+)^2 + \mathbb{B} (u^+v^+ - u^+v^-) 
       + \mathbb{C} (v^+u^+ - v^+ u^-) + \mathbb{D} (v^+)^2
\right]\eta_R\, r\, dr  \\
&\qquad \ge \int_R^\infty \left[ 
    \mathbb{A}(u^+)^2 + (\mathbb{B}  + \mathbb{C}) v^+u^+ + \mathbb{D} (v^+)^2
\right]\eta_R\, r\, dr  \\
&\qquad\ge \delta \int_R^\infty \left[ (u^+)^2 + (v^+)^2\right]\eta_R\, r\, dr.
\end{align*}

Integrating the first term by parts, using the hypothesis $u(R)\le 0$ and the explicit form of $\eta_R$,
we obtain:
\begin{align*}
-\int_R^\infty \eta_R\,u^+\Delta_r u\, r\, dr
&= u^+(R)\eta_R(R)Ru'(R) + \int_R^\infty \left\{ \eta_R [(u^+)']^2 + {1\over R}\eta_R u^+\, [u^+]'\right\} r\, dr \\
&\ge \frac12\int_R^\infty \eta_R [(u^+)']^2\, r\, dr 
- {1\over 2R^2} \int_R^\infty \eta_R \, [u^+]^2\, r\, dr.
\end{align*}
An analogous computation may be made for the second term (involving $v^+$), and inserting in \eqref{comp1} we conclude that
$$ 0\ge \int_R^\infty \left\{ ([u^+]')^2 + ([v^+]')^2 
     + \left(\delta-{1\over 2R^2}\right)([u^+]^2 + [v^+]^2)\right\}\eta_R\, r\, dr.  $$
As $\eta_R>0$ on $[R,\infty)$, we conclude that $u^+, v^+\equiv 0$ on $[R,\infty)$, and the lemma is proven.
\end{proof}

The proof of the asymptotic formulae in Theorem~\ref{asymptotics} thus relies on the construction of appropriate sub- and super-solutions for the system \eqref{GLt}. By taking linear combinations of the equations \eqref{GLt} we can rewrite the system in `diagonalized' form:
\begin{equation}\label{systftdiag}
\begin{gathered}
(1-\frac{t^2}{4})\left( \Delta_r f_- -\frac{1}{r^2}f_-\right) = f_-(2f_+^2+f_-^2-1) -\frac t2 f_+ (2f_-^2+f_+^2-1), \\
(1-\frac{t^2}{4})\left( \Delta_r f_+ -\frac{1}{r^2}f_+\right) =f_+(2f_-^2+f_+^2-1)-\frac t2 f_-(2f_+^2+f_-^2-1),
\end{gathered}
\end{equation}
which will be more convenient to work with during the proof of Theorem~\ref{asymptotics}.

\begin{proof}[Proof of Theorem~\ref{asymptotics}] For simplicity of notation, we denote $f^t_{\pm}=f_\pm$ in the proof, suppressing the dependence on $t$. 
We also denote by $\tau:=1-{t^2\over 4}\in [\frac34,1)$.

\medskip

\noindent
{\bf Step 1:} Construction of subsolution/supersolution pairs.  We begin with supersolutions. Let
\begin{align}
 w_{+}=t\left[{a_{+}\over r^2}+{b_{+}\over r^4}+c_{+}\frac{R^6}{r^6}\right],\ \label{wp}\\
 w_{-}=1+{a_{-}\over r^2}+{b_{-}\over r^4}+c_{-}\frac{R^6}{r^6},\ \label{wm}
\end{align}
where $a_{\pm}, b_{\pm}, c_\pm$ and $R$ are to be chosen so that
\begin{gather} \label{E-}
E_-:= [-\tau\Delta_R w_- +{w_-\over r^2}] + w_-(2w_+^2+w_-^2-1)
                 -{t\over 2}w_+(2w_-^2+w_+^2-1) \ge 0, \\
\label{E+}
E_+:= [-\tau\Delta_R w_+ +{w_+\over r^2}] + w_+(2w_-^2+w_+^2-1)
                 -{t\over 2}w_-(2w_+^2+w_-^2-1) \ge 0,
\end{gather}
for all $r\ge R$, and 
\begin{equation}\label{BC}
w_-(R)\ge f_{-}(R), \qquad w_+(R)\ge f_{+}(R).
\end{equation}

Expanding \eqref{E+} and \eqref{E-} yields terms which are polynomials in even powers of $r^{-1}$, of the form:
\begin{equation*}
E_+= t\sum^{9}_{k=1} M^{+}_{2k} {1\over r^{2k}}, \qquad
E_-= \sum^{9}_{k=1} M^{-}_{2k}{1\over r^{2k}},
\end{equation*}
where $M^{\pm}_{2k}=M^{\pm}_{2k}(t, R, a_{\pm}, b_{\pm}, c_\pm)$ is a polynomial in each of its arguments.  The expansion is quite horrific, but may be explicitly evaluated with the help of a symbolic algebra program such as Maple.
First, we choose $a_\pm$ in order to force the lowest order coefficients
 $M_2^\pm$ to vanish:  indeed, the expansion yields 
\begin{equation*}
M^{-}_{2}=2a_- - {t^2\over 2}a_+ + \tau=0, \qquad 
  M^+_2=-a_-+a_+=0,
\end{equation*}
which gives the coefficients of $r^{-2}$, $a_-=-\frac12=a_+$, as in \eqref{asym}.

Similarly, we fix the values of the coefficients $b_\pm$ in order that the $r^{-4}$ terms vanish,
\begin{gather*}
M^{-}_{4}=2b_- -{t^2\over 2}b_+ -3\tau a_- + 3a_-^2 -  2t^2a_+a_- +2t^2 a_+^2=0, 
\\
M^+_4 = b_+  -b_- -3\tau a_+ -\frac32 a_-^2 -t^2 a_+^2 + 4 a_+a_- =0.
\end{gather*}
Thus, $b_-= -{5t^2 +9\over 8}$, $b_+= -{13\over 4}$ are the coefficients of $r^{-4}$ given in the expansion \eqref{asym}.

The values of $a_{\pm}, b_{\pm}$ may then be substituted into the expansions of \eqref{E+} and \eqref{E-}, and the expressions for $M^\pm_{2k}$ may be viewed as functions of $R$. The exact form of the coefficients $M^\pm_{2k}$ is very complex, but they are all polynomials in $R$, $t$, and $c_\pm$. As we will choose $R$ large, we are only interested in the leading order of each. We obtain:
$$
\begin{gathered}
M^{+}_{6}= (-c_-+c_+)R^6 + O(1), 
  \qquad M^-_6 = \left(2c_- -{t^2\over 2}c_+\right) R^6+ O(1), \\
M^{\pm}_{8}=O(R^{6}),\qquad M^{\pm}_{10}=O(R^{6}),\\
M^{+}_{12}=\left( 4c_+ c_- -{t^2\over 2}(2c_+^2+3c_-^2)\right)R^{12} + O(R^6), \\
M^{-}_{12}=(-2t^2c_+c_- +2t^2c_+^2 +3 c_-^2)R^{12}+O(R^{6}),\\
M^{\pm}_{14}=O(R^{12}),\qquad M^{\pm}_{16}=O(R^{12}),\\
M^{+}_{18}=\left(c_+^3t^2 -c_-c_+^2t^2 + 2c_+c_-^2-\frac12 c_-^3\right)R^{18},\\
M^{-}_{18}=\left(c_-^3 + 2c_+^2c_- t^2 -c_+c_-^2 t^2 -\frac12 c_+^3 t^4\right)R^{18}.
\end{gathered}
$$
In each expression, the lower terms are uniformly bounded for $t\in [0,1]$.

Let $c_-=\delta$ and $c_+=2\delta$, with $\delta>0$ to be chosen later.  With this definition,
$$   M^+_6 = \delta R^6 + O(1), \qquad M^-_6 = \delta(2-t^2)R^6 + O(1), $$
where the remainder terms are uniformly bounded for $t\in [0,1]$.  
As $M^\pm_{6}$ are the leading order terms in $r$, this will ensure that we obtain the correct sign in each equation, and the value of $\delta$ will be fixed in order that the $r^{-6}$ terms indeed dominate the others in the expansion.
By choosing $R_1=R_1(\delta)$ sufficiently large, we may then ensure that when $R\ge R_1$,
\begin{equation}\label{4sumbound}
\left|{M^\pm_{8}\over r^8}+{M^\pm_{10}\over r^{10}}
  +{M^\pm_{14}\over r^{14}}
  +{M^\pm_{16}\over r^{16}}\right| \le C{R^6\over r^8} < {\delta\over 4}{R^6\over r^6},
\end{equation}
for all $r\in [R,\infty)$, with constant $C$ chosen independent of $t\in [0,1]$.
Next, with our choice of $c_\pm$, we have 
$$  |M^\pm_{12}|\le 7\delta^2 R^{12} + O(R^6)\le 8\delta^2 R^{12},  $$
for all $R\ge R_1$, making $R_1$ larger if necessary.
Hence we may fix $\delta$ with $0<\delta<{1\over 32}$, we have:
$$  \left|{M^\pm_{12}\over r^{12}}\right| \le {8\delta^2 R^{12}\over r^{12}} < {\delta\over 4}{R^6\over r^6}, $$
holds for all $r\in [R,\infty)$ with $R\ge R_1$.
Finally, we note that 
$$  M^+_{18}= \left(\frac72 + 4t^2\right)\delta^3, \qquad M^-_{18}=(1+6t^2-4t^4)\delta^3, $$
and for $t\in [0,1]$ each has the same sign as $\delta$, and thus these terms contribute with the desired sign in the evaluation of \eqref{E+}, \eqref{E-}, and may be neglected.

Putting these estimates together, it follows that for all $R\ge R_1$, 
$$  E_\pm\ge M_6^\pm r^{-6} - \left|\sum_{k=4}^8 M_{2k}^+ r^{-2k}\right| 
 \ge M_6^\pm r^{-6} - {\delta\over 2} {R^6\over r^6}> {\delta\over 4}{R^6\over r^6}>0, 
$$
for all $r\in [R,\infty)$, and uniformly in $t\in (0,1]$.  Thus, $(w_-, w_+)$ indeed satisfy the supersolution conditions \eqref{E-} and \eqref{E+} for $R\ge R_1$, as desired.

It remains to consider the behavior at the endpoint, $r=R$.  Since
$$   w_-(R) = 1 + \delta + O(R^{-2}), \quad  w_+(r)= 2t\delta + O(R^{-2}), $$
with $0<\delta<\frac{1}{32}$, by the hypothesis \eqref{decay} we may fix $R\ge R_1$  such that  $f_-(R)\le w_-(R)$ and $f_+(R)\le w_+(R)$ holds for all $t\in [T_1,T_2]$.  Thus \eqref{BC} holds as well, and we have completed the construction of supersolutions.

We also require a subsolution pair, $(z_-,z_+)$ for which $E_+\le 0$ and $E_-\le 0$ for all $r\in [R,\infty)$ and $z_-(R)\le f_-(R)$, $z_+(R)\le f_+(R)$, for $R$ sufficiently large proceeds in exactly the same way as for the supersolution pair above, except the coefficients 
$\frac12 c_+=c_-=-\delta<0$.  This completes Step 1 in the proof.

{\bf Step 2:} We apply the comparison Lemma~\ref{comparison} to the pair 
$(h_-,h_+)=(f_- - w_-, f_+ - w_+)$.  Denote by $Lu:= -\Delta_r u + r^{-2}u$  Then, an explicit calculation together with the construction of Step 1 shows that, for any sufficiently large $R$,
\begin{equation}\label{subsol}  \left\{ \begin{gathered}
Lh_- +\mathbb{A} h_- + \mathbb{B} h_+ \le 0\\
Lh_+ +\mathbb{C} h_- + \mathbb{D} h_+\le 0 \\
\end{gathered}\right.
\end{equation}
for $r\in [R,\infty)$, with $h_\pm(R)\le 0$.  The coefficients are functions of $r$, but have uniform limits as $r\to\infty$,
\begin{align*}
\mathbb{A}&= f_-^2 + f_- w_- + w_-^2 + 2f_+^2 -1-{t\over 2}(2 w_+ (f_- + w_-)) \longrightarrow 2, \\
\mathbb{B}&=2w_- (f_+ + w_+)-{t\over 2}(f_+^2 + f_+w_+ + w_+^2 + 2f_-^2 -1) \longrightarrow -{t\over 2},\\
\mathbb{C}&= 2 w_+ (f_- + w_-)-{t\over 2}(f_-^2 + f_- w_- + w_-^2 + 2f_+^2 -1) \longrightarrow -t,\\
\mathbb{D}&= f_+^2 + f_+w_+ + w_+^2 + 2f_-^2 -1-{t\over 2}(2w_- (f_+ + w_+)) \longrightarrow 1.
\end{align*}
Thus (taking $R$ larger if necessary) we may assume
the positivity condition \eqref{comp0} is satisfied in $[R,\infty)$ with $\delta= \frac34$, for example.  Lemma~\ref{comparison} applies, and we conclude that $h_\pm(r)\le 0$ on $[R,\infty)$, that is $f_\pm(r)\le w_\pm(r)$.

Taking $(h_-,h_+)=(z_- - f_-, z_+ - f_+)$, with $(z_-, z_+)$ the subsolution pair, we may repeat the above computations to arrive at the same system \eqref{subsol}, with coefficients 
$\mathbb{A,B,C,D}$ satisfying the same asymptotic conditions as above.  Thus, by Lemma~\ref{comparison} we may also conclude that $f_\pm(r)\ge z_\pm(r)$ for all $r\in [R,\infty)$, for $R$ sufficiently large.  Using the explicit form of $z_\pm, w_\pm$ from Step 1, we may conclude that the estimates \eqref{fmasy} and \eqref{fpasy} both hold.

\medskip

{\bf Step 3:} The derivative estimates.  Here we follow the method of Chen, Elliot, and Qi \cite{CEQ}.  Let 
\begin{align*}  g_-(r)&= f_-(r) - \left( 1 + {a_-\over r^2} + {b_-\over r^4}\right)
                   = f_-(r)  - w_-(r) + {c_-\over r^6}, \\
                   g_+(r)&= f_+(r) - \left( {a_+\over r^2} + {b_+\over r^4}\right)
                   = f_+(r)  - w_+(r) + {c_+\over r^6},
\end{align*}
where $a_\pm, b_\pm, c_\pm$ are as in Step 1.  By Step 2, we thus know that $g_\pm(r)=O(r^{-6})$.  A calculation then yields
\begin{gather*}  \Delta_r g_- = {g_-\over r^2}+\mathbb{A} g_- + \mathbb{B} g_+ +O(r^{-6}) = O(r^{-6}),
\\
\Delta_r g_+= {g_+\over r^2}+\mathbb{C} g_- + \mathbb{D} g_+ +O(r^{-6}) = O(r^{-6}),
\end{gather*}
uniformly for $t\in [T_1,T_2]$, with $\mathbb{A,B,C,D}$ as in Step 2.

For each $k\in\RR$ there exists $r_k\in (k, 2k)$ such that 
$$g'_\pm(r_k) = {g_\pm(2k)-g_\pm(k)\over k}=O(k^{-7})=O(r_k^{-7}). $$
Integrating the estimate on $\Delta_r g_\pm$, we have, for all $r\ge R$, 
$$
|rg'_{\pm}(r)-r_{k}g'_{\pm}(r_{k})|=\left|\int^{r_{k}}_{r}\Delta_r g_\pm(r) \,r\, dr\right|
\le C\int^{r_{k}}_{r}r^{-5}dr\le \frac{4C}{r^4},
$$  
with constant $C>0$.  We now let $k\to\infty$, and use $r_k g'_\pm(r_k)\to 0$, to obtain
$|rg'_\pm(r)|\le {4C\over r^4}$, and hence $|f'_\pm(r) + {2a_\pm\over r^3}|\le {C'\over r^5}$, which gives \eqref{f'masy}, \eqref{f'pasy}.
\end{proof}

\bibliographystyle{plain}
\bibliography{pwave}

\end{document}